\newcommand{\supp}{\operatorname{supp}}
\newcommand{\A}{\mathcal{A}}
\newcommand{\cD}{\mathcal{D}} 
\renewcommand{\P}{\mathcal{P}}
\newcommand{\W}{\mathcal{W}}
\newcommand{\X}{\mathcal{X}}
\newcommand{\Y}{\mathcal{Y}}
\newcommand{\cP}{\mathcal{P}}
\newcommand{\cL}{\mathcal{L}}
\newcommand{\cX}{\mathcal{X}}
\newcommand{\cR}{\mathcal{R}}
\newcommand{\cE}{\mathcal{E}}
\newcommand{\cH}{\mathcal{H}}
\newcommand{\cA}{\mathcal{A}}
\newcommand{\cY}{\mathcal{Y}}
\newcommand{\cW}{\mathcal{W}}
\newcommand{\sd}{\mathsf{d}}
\newcommand{\sD}{\mathsf{D}}
\newcommand{\PX}{\P(\X)}
\newcommand{\PY}{\P(\Y)}
\newcommand{\CE}{\mathsf{CE}}
\newcommand{\HJ}{\mathsf{HJ}}
\newcommand{\hmu}{\widehat\mu}
\newcommand{\R}{\mathbb{R}}
\newcommand{\Z}{\mathbb{Z}}
\newcommand{\N}{\mathbb{N}}
\newcommand{\dd}{\; \mathrm{d}}
\newcommand{\ddd}{\mathrm{d}}
\renewcommand{\a}{\alpha}
\newcommand{\eps}{\varepsilon}
\newcommand{\ip}[1]{\langle {#1}\rangle}
\newcommand{\iip}[1]{\langle\hspace{-0.4ex}\langle {#1}\rangle\hspace{-0.4ex}\rangle}
\newcommand{\bip}[1]{\big\langle {#1}\big\rangle}
\newcommand{\norm}[1]{\| {#1}\|}
\newcommand{\abs}[1]{\lvert#1\rvert}
\newcommand{\ddt}{\frac{\mathrm{d}}{\mathrm{d}t}}
\theoremstyle{plain}
\newtheorem{theorem}{Theorem}[section]
\newtheorem{lemma}[theorem]{Lemma}
\newtheorem{proposition}[theorem]{Proposition}
\newtheorem{definition}[theorem]{Definition}
\newtheorem{assumption}[theorem]{Assumption}
\theoremstyle{remark}
\newtheorem{remark}[theorem]{Remark}
\newtheorem{example}[theorem]{Example}
\newtheorem*{claim*}{Claim}
\newtheorem*{remark*}{Remark}
\newtheorem*{example*}{Example}
\newtheorem*{notation*}{Notation}
\numberwithin{equation}{section}
\definecolor{jan}{rgb}{0.0,0.3,0.8}
\definecolor{mat}{rgb}{0.0,0.5,0.3}
\title{On the geometry of geodesics in discrete optimal transport}
\author{Matthias Erbar}
\address{Institut f\"ur angewandte Mathematik, Universit\"at Bonn, Endenicher Allee 60, 53115 Bonn, Germany}
\email{erbar@iam.uni-bonn.de}
\author{Jan Maas}
\address{Institute of Science and Technology Austria (IST Austria),
Am Campus 1, 3400 Klosterneuburg, Austria}
\email{jan.maas@ist.ac.at}
\author{Melchior Wirth}
\address{Institute of Mathematics, Friedrich Schiller University Jena, 07737 Jena, Germany}
\email{melchior.wirth@uni-jena.de}
\begin{document}

\begin{abstract}
We consider the space of probability measures on a discrete set $\X$, endowed with a dynamical optimal transport metric.
Given two probability measures supported in a subset $\Y \subseteq \X$, it is natural to ask whether they can be connected by a constant speed geodesic with support in $\Y$ at all times. 
Our main result answers this question affirmatively, under a suitable geometric condition on $\Y$ introduced in this paper. 
The proof relies on an extension result for subsolutions to discrete Hamilton--Jacobi equations, which is of independent interest.
\end{abstract}

\maketitle

\tableofcontents

\section{Introduction}
 
Optimal transport continues to be a very active field of research, both in mathematics and in applications. One of the central objects is the $L^p$-Kantorovich metric $W_p$, defined by
\begin{align*}
	W_p(\mu,\nu) = \left(\inf_{\pi \in \Pi(\mu,\nu)}\int_{\cX \times \cX}d(x,y)^p\,d\pi(x,y)\right)^{1/p},
\end{align*}
where $\mu$ and $\nu$ are Borel probability measures on the metric space $(\cX,d)$, and $\Pi(\mu,\nu)$ is the set of all couplings of $\mu$ and $\nu$.

The metric $W_2$ plays a special role in the theory, as it is the crucial object in the gradient flow formulation of dissipative PDE (starting from \cite{JKO98,Otto01}) and in the synthetic theory of Ricci curvature \cite{LoVi09,Stur06}, which builds on McCann's discovery that several important functionals enjoy convexity properties along $W_2$-geodesics \cite{McC97}.

In spite of the robustness of the optimal transport theory, it is well known that if the underlying space is discrete, $W_2$ has several undesirable properties that hamper its usefulness. In particular, if $\cX$ is discrete, the metric space $(\cP(\cX), W_2)$ does not contain any non-trivial geodesics. 

To circumvent this problem, several authors introduced discrete dynamical transport metrics $\cW$, based on discrete versions of the Benamou--Brenier formulation of optimal transport \cite{CHLZ12,Ma11,Mie11}. These metrics have been intensively studied in recent years; in particular, gradient flow formulations have been obtained for nonlinear evolution equations \cite{ErMa14,MaMa16}, and a discrete theory of Ricci curvature has been developed based on geodesic convexity of entropy functionals along discrete optimal transport \cite{ErMa12,Mie13}. Such Ricci curvature bounds have subsequently been obtained in various discrete probabilistic models \cite{ErMaTe15,FaMa16,EHMT17}. 

\medskip

In spite of the relevance of the notion of geodesic convexity, geometric properties of $\cW$-geodesics are currently poorly understood.
The aim of this paper is to obtain results of this type. 
We focus on the issue of \emph{locality} of geodesics in the space of probability measures. 

More precisely, let $(\cX, \sd)$ be a metric space, and consider a geodesic  metric $\sD$ on (a subset of) the space of Borel probability measures $\cP(\cX)$. We say that a subset $\cY \subseteq \cX$ has the \emph{weak locality} property if any pair of probability measures $\mu_0, \mu_1 \in \cP(\cX)$ supported in $\cY$ can be connected by a geodesic that is supported in $\cY$ at  all times. The notion of \emph{strong locality} is defined by requiring this property to hold for \emph{any} geodesic connecting $\mu_0$ and $\mu_1$. If any pair of measures can be connected by a unique geodesic, the notions of weak and strong locality coincide, but this property is currently unknown for discrete dynamical transport metrics.

If $(\cX, \sd)$ is a geodesic metric space, and $\sD$ is the Kantorovich metric $W_p$ for some $1 \leq p < \infty$, it is well known that a subset $\cY$ has the weak (resp. strong) locality property if and only if it is weakly (resp. strongly) geodesically convex. This follows from the fact that geodesics in $(\cP_p(\cX), W_p)$ are supported on geodesics in $(\cX, \sd)$; cf. \cite{Lisi07} for a precise formulation of this result in a general setting.

\medskip

Interestingly, the issue of locality in the discrete setting (with a discrete dynamical transport metric $\cW$ on $\cP(\cX)$ instead of $W_p$) turns out to be much more delicate. 
For example, if one considers the complete graph on a three-point set $K_3$, then any geodesic connecting two Dirac masses transports a nontrivial part of the mass via the third point. Hence, two-point subsets of $K_3$ do not have the locality property. This is shown in Section \ref{sec:triangle} of this paper.

Based on this observation one may conjecture that any nontrivial geodesic  has support on the whole graph. However, we show that this is not the case. In fact, the main contribution of this paper is the introduction of a geometric condition for subsets $\cY \subseteq \cX$ (the \emph{retraction property}), that is shown to be sufficient for locality; see Theorem \ref{thm:geodesics-extension}. The retraction property is easy to check in concrete examples, as is shown in Section \ref{sec:local}. 

As an application of our main result, we show that if $\cX$ is any subset of the grid $\Z^d$ with the usual graph structure, and $\cY \subseteq \cX$ is a hyperrectangle, then any pair of measures supported in $\cY$ can be connected by a geodesic supported in $\cY$. In particular, this property holds for measures supported on subsets of lines, or $k$-dimensional hyperplanes of dimension less than $d$.

A key ingredient in the proof of our main result is a duality result for the discrete transport metric $\cW$, which was recently obtained by Gangbo, Li, and Mou (under slightly more restrictive conditions on the transition rates) \cite{GLM17}. 
We interpret this result (Theorem \ref{thm:dual-W} below) in terms of subsolutions of a discrete Hamilton--Jacobi equation and present a different proof based on Fenchel--Rockafellar duality. 
We then show  that subsolutions of the Hamilton--Jacobi equation on a subset $\cY \subseteq \cX$ can be extended to the full space $\cX$, provided that $\cY$ has the retraction property; cf. Theorem \ref{thm:HJ-extension}. Our main theorem is then a straightforward consequence of this result.

\subsection*{Structure of the paper}
In Section \ref{sec:notation} we collect the necessary preliminaries on discrete transport metrics. Section \ref{sec:dual} contains the dual formulation of the transport problem in terms of Hamilton--Jacobi subsolutions. 
In Section \ref{sec:local} we introduce the retraction property, we show the extension result for subsolutions to the Hamilton--Jacobi equation (Theorem \ref{thm:HJ-extension}), and we prove the main result on weak locality of subsets with the retraction property (Theorem \ref{thm:geodesics-extension}). 
In Section \ref{sec:ends} we show that the strong locality property holds for Markov chains with ``dead ends''.
Finally, it is shown in Section \ref{sec:triangle} that geodesics between Dirac measures on the triangle have full support.

\small
\subsection*{Acknowledgements}

M.E. gratefully acknowledges support by the German Research Foundation through the Hausdorff Centre for Mathematics and the Collaborative Research Centre 1060, \emph{The Mathematics of Emergent Effects}. J.M.gratefully acknowledges support by the European Research Council (ERC) under the European Union's Horizon 2020 research and innovation programme (grant agreement No 716117), and by the Austrian Science Fund (FWF), Project SFB F65. M.W. gratefully acknowledges financial support by the German Academic Scholarship Foundation (Studienstiftung des deutschen Volkes) and by the German Research Foundation (DFG) via RTG 1523.

\normalsize

\section{The discrete transport distance}
\label{sec:notation}

In this section we briefly recall the definition and basic properties of the discrete transport distance constructed in \cite{CHLZ12,Ma11,Mie11}.

Let $\X$ be a finite set, and let $Q: \X \times \X \to \R_+$ denote the 
transition rates for a Markov chain on $\cX$. Without loss of generality, we use the convention that $Q(x,x) = 0$ for all $x \in \cX$.
The corresponding generator $\cL$ acts on functions $\phi : \cX \to \R$ by
\begin{align*}
  \cL \phi(x) = \sum_{y \in \cX} Q(x,y)\big(\phi(y)-\phi(x)\big)\;.
\end{align*}
We assume that $Q$ is \emph{irreducible}, i.e., each pair $(x, y) \subseteq \cX \times \cX$ can be connected, for some $n \in \N$, by a path $\{x_i\}_{i=0}^n$ satisfying $x_0 = x$, $x_n = y$, and $Q(x_{i-1}, x_i) > 0$ for $i = 1, \ldots, n$. 
This assumption implies the existence of a unique stationary probability measure $\pi$ on $\X$. Moreover, $\pi$ is strictly positive. 
We will furthermore assume that $Q$ is \emph{reversible} with respect to $\pi$, i.e., the \emph{detailed balance condition} holds:
\begin{align*}
	 \pi(x) Q(x,y) = \pi(y) Q(y,x) \quad \text{ for all } x,y \in \cX \;.
\end{align*}
The triple $(\X,Q,\pi)$ will be referred to as a \emph{Markov triple}. 

A Markov chain induces a graph on the vertex set $\cX$, whose edge set   is given by $\cE = \{ (x,y) \in \cX \times \cX : Q(x,y) > 0 \}$. 
We write $x \sim y$ iff $Q(x,y) > 0$.
The assumption that $Q$ is irreducible corresponds to the graph $(\cX,\cE)$ being connected. The detailed balance condition implies that the graph is undirected.

\medskip 

In order to define the discrete transport distance on the set $\PX$ of
probability measures on $\X$, we introduce the following objects.

\begin{definition}[Continuity equation]
  A pair $(\mu,V)$ is said to satisfy the \emph{continuity
    equation} if
  \begin{itemize}
  \item[(i)] $\mu:[0,T]\to\R$ is continuous;
  \item[(ii)] $V:[0,T]\to\R^{\X\times\X}$ is locally integrable;
  \item[(iii)] $\mu_t\in\PX$ for all $t\in[0,T]$;
  \item[(iv)] the continuity equation holds in the sense of
    distributions:
    \begin{align}\label{eq:cont-eq}
      \ddt \mu_t(x) + \frac12\sum_{y\in\X}\Big( V_t(x,y) - V_t(y,x) \Big) = 0 \quad \quad \text{ for all } x\in\X\;.
    \end{align}
  \end{itemize}
  In this case, we write $(\mu,V) \in \CE_T$. Furthermore, 
  $\CE_T(\mu^0,\mu^1)$ denotes the collection of pairs $(\mu,V)\in\CE_T$ satisfying $\mu|_{t=0}=\mu^0$ and $\mu|_{t=T}=\mu^1$.
\end{definition}

\begin{definition}[Admissible mean]\label{def:mean}
An \emph{admissible mean} is a continuous function
$\Lambda: \R_+ \times \R_+ \to \R_+$ that is $C^\infty$ on
$(0,\infty) \times (0,\infty)$, symmetric, positively $1$-homogeneous,
non-decreasing in each of its variables, jointly concave, and
normalised, i.e., $\Lambda(1,1) = 1$. 
\end{definition}

Of particular interest to us is the \emph{logarithmic mean} given by
\begin{align*}
  \Lambda_{\text{log}}(s,t):=\int_0^1s^\alpha t^{1-\alpha}\dd \alpha\;,
\end{align*}
since it arises in the entropic gradient flow structure for the master equation $\partial_t \mu = \cL^* \mu$.  
Other relevant examples of admissible means are the harmonic mean $\Lambda_{\text{har}}(s,t) = \frac{2st}{s+t}$, the geometric mean $\Lambda_{\text{geo}}(s,t) = \sqrt{st}$, and the arithmetic mean $\Lambda_{\text{ari}}(s,t) = \frac{s+t}{2}$. Some of these means arise in gradient structures for porous medium equation; cf. \cite{ErMa14}. From now on, we will fix an admissible mean $\Lambda$. 

\medskip

The action functional for the discrete transport distance is defined using  the convex and lower semicontinuous function $A: \R^3 \to [0,\infty]$ given by 
\begin{align}\label{eq:def-A}
A(s,t,w) :=
  \begin{cases}
    \frac{w^2}{\Lambda(s,t)}\;,& w \in \R,\ s,t>0\;,\\
    0\;, & w=0,\ s,t\geq0\;,\\
    +\infty\;, &\text{otherwise} \;.
  \end{cases}
\end{align}
For $\mu \in \PX$ and $V: \cX \times \cX \to \R$ we define the action by
\begin{align*}
  \A(\mu,V) =\frac12\sum_{x,y\in\X}A\big(\mu(x)Q(x,y),\mu(y)Q(y,x), V(x,y)\big)\;.
\end{align*}
For brevity we sometimes write
\begin{align*}
	\hmu(x,y) := \Lambda\big( \mu(x)Q(x,y), \mu(y) Q(y,x) \big)\;. 
\end{align*}

\begin{definition}[Discrete transport distance]
For a Markov triple $(\cX, Q, \pi)$ and an admissible mean $\Lambda$, the \emph{discrete transport distance} $\W$ is defined for
  $\mu_0,\mu_1\in\PX$ by
  \begin{align}\label{eq:def-W}
    \W(\mu_0,\mu_1):=\inf\Bigg\{\sqrt{\int_0^1\A(\mu_t,V_t)\dd t} : (\mu,V)\in\CE_1(\mu_0,\mu_1)\Bigg\}\;.
  \end{align}
\end{definition}

It has been shown in \cite{ErMa12} that minimisers exist in the minimisation problem above. Any minimal curve $(\mu_t)_{t\in[0,1]}$ is a constant speed geodesic, i.e., it satisfies $\W(\mu_s,\mu_t)=|t-s|\W(\mu_0,\mu_1)$ for all $s,t\in[0,1]$. 

\begin{remark}\label{rem:anti-symmetry}
Without loss of generality we may assume in the minimisation \eqref{eq:def-W} that $V$ is anti-symmetric, i.e., $V_t(x,y)= -V_t(y,x)$. In fact, for each $U \in \R$, the quantity $|V(x,y)|^2+|V(y,x)|^2$ is minimised
among all choices of $V(x,y), V(y,x)$ such that $V(x,y) - V(y,x) = U$
by choosing $V(y,x) = -V(x,y) = U/2$.
\end{remark}

Finally, let us introduce some convenient notation to be used in the
sequel. We denote the Euclidean inner products on $\R^\X$ and
$\R^{\X\times\X}$ by
\begin{align*}
 \ip{\phi, \psi} = \sum_{x \in \cX} \phi(x) \psi(x) 
 \quad \text{and} \quad
  \iip{\Phi, \Psi} = \frac12\sum_{x,y \in \cX} \Phi(x,y) \Psi(x,y) \ . 
\end{align*}
The discrete gradient of a function $\phi \in \R^\cX$ will be denoted by
$\nabla \phi(x,y) = \phi(y) - \phi(x)$, and the discrete divergence of $\Phi\in\R^{\X\times\X}$ is given by 
\begin{align*}
  \nabla\cdot\Phi(x) = \frac12\sum_{y\in\X}\big(\Phi(x,y)-\Phi(y,x)\big)\;.
\end{align*}

Furthermore, for $\mu \in \PX$ and $\Phi\in\R^{\X\times\X}$ we write
\begin{align*}
 \| \Phi \|_\mu:= \sqrt{\iip{\Phi,\Phi\cdot\hat \mu}}\;.
\end{align*} 
where the multiplication of $\Phi\cdot\hat \mu$ is understood componentwise.
For all $\Phi,V\in\R^{\X\times\X}$ and $\mu\in\PX$, Young's inequality yields
\begin{align}\label{eq:CS}
  \iip{\Phi,V} \leq \frac12 \|\Phi\|_\mu^2 + \frac12 \cA(\mu,V)\;.
\end{align}

\section{Duality for discrete optimal transport}
\label{sec:dual}

We present a dual formulation for the discrete transport distance which can be seen as a discrete analogue of the Kantorovich duality. This result has recently been proved in \cite{GLM17} using different methods; cf. Proposition 3.10 and Theorems 5.10 and 7.4 in that paper. Note that the result in \cite{GLM17} is stated under slightly stronger assumptions on the transition rates. In our notation, it is assumed there that $Q(x,y) = Q(y,x)$ and $\pi$ is constant. The slightly greater generality here does not cause additional difficulties.

\begin{definition}[Hamilton--Jacobi subsolution]\label{def:HJ}
A function $\phi\in H^1\big((0,T);\R^\cX\big)$ is said to be a  \emph{Hamilton--Jacobi subsolution} if for a.e. $t$ in $(0,T)$, we have
\begin{align}\label{eq:HJ}
 \ip{\dot\phi_t,\mu} 
 	+ \frac12 \|\nabla\phi_t\|^2_\mu 
		\leq 0 
			\quad \text{ for all } \mu \in \cP(\cX)\;.  
\end{align}
The collection of all Ha\-mil\-ton--Jacobi subsolutions is denoted $\HJ^T_\cX$.
\end{definition}

\begin{remark}\label{rem:HJ-scaling}
 Given $\phi \in \HJ_\cX^T$ and $\lambda>0$, set $\phi^\lambda_t:=\lambda \phi_{\lambda t}$. Then $\phi^\lambda\in \HJ_\cX^{\lambda T}$.
\end{remark}

\begin{theorem}[Duality formula]\label{thm:dual-W}
For $\mu_0,\mu_1\in\PX$ we have
\begin{align}\label{eq:dual-W}
 \frac12\cW^2(\mu_0,\mu_1)=\sup\big\lbrace
\ip{\phi_1,\mu_1}-\ip{\phi_0,\mu_0}\,:\, \phi\in \HJ^1_\cX\big\rbrace\;.
\end{align}
  This representation remains true if the supremum is restricted to the class of functions $\phi\in C^1\big([0,1],\R^\cX\big)$ satisfying \eqref{eq:HJ}.

\end{theorem}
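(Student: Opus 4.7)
The plan is to prove both inequalities in \eqref{eq:dual-W} separately, and then obtain the $C^1$ refinement by a mollification argument.

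The inequality $\frac{1}{2}\cW^2(\mu_0,\mu_1) \geq \sup\{\ip{\phi_1,\mu_1}-\ip{\phi_0,\mu_0}\}$ is an \emph{a priori} bound that I would establish by a chain-rule computation. Fix $\phi \in \HJ^1_\cX$ and $(\mu,V) \in \CE_1(\mu_0,\mu_1)$. Using the continuity equation \eqref{eq:cont-eq} together with the discrete integration by parts $\ip{\phi_t,\nabla \cdot V_t} = -\iip{\nabla\phi_t, V_t}$, one has $\ddt \ip{\phi_t,\mu_t} = \ip{\dot\phi_t,\mu_t} + \iip{\nabla\phi_t,V_t}$. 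Combining the HJ condition \eqref{eq:HJ} at $\mu = \mu_t$ with the Young-type bound \eqref{eq:CS} gives $\ddt \ip{\phi_t,\mu_t} \leq \frac{1}{2}\cA(\mu_t, V_t)$; integrating and taking the infimum over $(\mu,V)$ yields the claim.

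For the reverse inequality, I would apply Fenchel--Rockafellar duality. Viewing $\frac{1}{2}\cW^2$ as the infimum of the convex functional $F(\mu,V) = \frac{1}{2}\int_0^1 \cA(\mu_t,V_t)\,\dd t$ on a Banach space of curves with prescribed endpoints $\mu_0,\mu_1$, subject to the linear constraint $\dot\mu + \nabla \cdot V = 0$, I would introduce a Lagrange multiplier $\phi$ for the continuity equation. The pointwise Legendre transform of $\frac{1}{2}\cA(\mu,\cdot)$ in the $V$-variable equals $\frac{1}{2}\|\cdot\|_\mu^2$ (the equality case of \eqref{eq:CS}), so the resulting dual functional reads $\ip{\phi_1,\mu_1} - \ip{\phi_0,\mu_0} - \int_0^1 H(\phi_t)\,dt$, where $H(\phi_t) = \sup_{\mu \in \PX}\{\ip{\dot\phi_t,\mu}+\frac{1}{2}\|\nabla\phi_t\|^2_\mu\}$. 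The key observation is that replacing $\phi_t$ by $\phi_t + g(t)\one$ with $g$ an antiderivative of $-H(\phi)$ leaves $\nabla\phi$ (and hence $\|\nabla\phi\|_\mu$) unchanged, forces $H \equiv 0$ for the new $\phi$, and alters the boundary pairing by exactly $-\int_0^1 H(\phi_t)\,dt$. Thus, as $\phi$ ranges over all admissible curves, the dual value ranges over the boundary pairings of HJ subsolutions, which is precisely the supremum on the right-hand side of \eqref{eq:dual-W}.

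The delicate point will be verifying the qualification condition needed for the absence of a duality gap. When $\mu_0,\mu_1$ are strictly positive on $\cX$, the linear interpolation $\bar\mu_t = (1-t)\mu_0 + t\mu_1$ together with a suitable $\bar V$ satisfying the continuity equation is a primal-feasible point at which $\widehat{\bar\mu}_t(x,y)$ is uniformly bounded away from $0$, so $F$ is continuous at $(\bar\mu,\bar V)$ in the relevant topology and the standard form of Fenchel--Rockafellar applies. For arbitrary $\mu_0,\mu_1 \in \PX$, I would first establish the identity on this dense subset and then pass to the limit, using the joint continuity of $\cW$ and the lower semicontinuity of the dual functional (a supremum of affine functions in $(\mu_0,\mu_1)$). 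Finally, for the $C^1$ refinement, given $\phi \in \HJ^1_\cX$ I would extend $\phi$ to a slightly larger interval by appending linear pieces $\phi_0 + t c_0 \one$ on $[-\delta,0]$ and $\phi_1 + (t-1) c_1 \one$ on $[1, 1+\delta]$, choosing the scalars $c_0, c_1$ sufficiently negative to keep \eqref{eq:HJ} valid (which is possible because $\PX$ is compact and the Dirichlet form $\mu \mapsto \|\nabla\phi_i\|_\mu^2$ is thus bounded). Mollifying in time then yields a $C^\infty$ curve; since the left-hand side of \eqref{eq:HJ} is convex in $\phi$, Jensen's inequality applied to the convolution preserves the HJ subsolution property, and the boundary pairings converge as the mollification parameter tends to $0$.
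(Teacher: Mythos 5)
Your outline is correct in spirit but reverses the paper's Fenchel--Rockafellar setup, and the limiting argument at the end contains a genuine gap. You place the transport problem on the primal side of Fenchel--Rockafellar and $\phi$ on the dual side; the qualification condition then requires the action functional $\frac12\int_0^1\cA(\mu_t,V_t)\,\ddd t$ to be continuous at a strictly positive interpolation, in a topology compatible with the pairing against $\phi$ --- this is nontrivial because $\cA$ involves division by $\hmu$, and you would still need a density step for general boundary data. The paper avoids both difficulties by doing the opposite: it takes $E=C^1([0,1],\R^\cX)\times L^2$, puts the Hamilton--Jacobi side on the $\sup$ side of Fenchel--Rockafellar (so $F,G$ live on the $\phi$ side and the continuity equation appears only through the Legendre transforms $F^*,G^*$), and verifies the qualification condition at the trivial point $\phi_0(t)=t(-1,\dots,-1)$, $\Phi_0\equiv 0$, where $G$ is plainly continuous. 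In that setup all boundary data $\mu_0,\mu_1$ are handled in one shot, with no strict-positivity hypothesis and no limiting argument.

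The concrete gap in your version is the sentence invoking ``lower semicontinuity of the dual functional.'' Denote by $D(\mu_0,\mu_1)$ the supremum on the right of \eqref{eq:dual-W}. You want $D(\mu_0,\mu_1)\geq\frac12\cW^2(\mu_0,\mu_1)$ for general endpoints, having established equality on the strictly positive pairs. But l.s.c. of $D$ gives $D(\mu_0,\mu_1)\leq\liminf_n D(\mu_0^n,\mu_1^n)$ for $\mu_i^n\to\mu_i$, which is the wrong direction (and already known from the a priori bound). The correct ingredient is \emph{convexity} of $D$: set $\mu_i^\eps=(1-\eps)\mu_i+\eps\pi$, which is strictly positive because $\pi>0$, and use $D(\mu_0^\eps,\mu_1^\eps)\leq(1-\eps)D(\mu_0,\mu_1)+\eps D(\pi,\pi)=(1-\eps)D(\mu_0,\mu_1)$ together with $D(\mu_0^\eps,\mu_1^\eps)=\frac12\cW^2(\mu_0^\eps,\mu_1^\eps)$ and continuity of $\cW$; letting $\eps\to0$ gives the missing inequality. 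Everything else --- the chain-rule argument for the easy inequality (this is essentially \eqref{eq:CS} plus integration by parts), the gauge transformation $\phi_t\mapsto\phi_t+g(t)\one$ with $g'=-H(\phi_t)$ that normalises the Hamiltonian to zero, and the mollification for the $C^1$ refinement (the paper uses the rescaling of Remark \ref{rem:HJ-scaling} rather than appended affine pieces, but both devices work) --- is sound, provided you also say in what Banach space the primal qualification condition is to be checked.
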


Let us first give a heuristic argument for the duality result above. We start by
introducing a Lagrange multiplier for the continuity equation
constraint and write
\begin{align}\label{eq:dual1}
  \frac12\cW^2(\mu_0,\mu_1)&=\inf\limits_{\mu,V}\sup\limits_\phi\left\{\int_0^1\frac12\cA(\mu_t,V_t)\dd
    t + \int_0^1\ip{\phi_t,\dot\mu_t+\nabla\cdot V_t} \dd
    t\right\}\;,
\end{align}
where the supremum is taken over all (sufficiently smooth) functions
$\phi:[0,1]\to\R^\cX$ and the infimum is taken over all (sufficiently
smooth) curves $\mu: [0,1] \to \R_+$ connecting $\mu_0$ and
$\mu_1$, and over all $V : [0,1] \to \R^{\cX \times \cX}$. Here we do
not require that $(\mu,V)$ satisfies the continuity equation, but the
inner supremum takes the value $+\infty$ if $(\mu,V)$ does not belong
to $\CE_1(\mu_0,\mu_1)$. We also do not require that $\mu$ takes
values in $\PX$, but this is automatically enforced by the continuity
equation. 

Integrating by parts and using the min--max principle we obtain
\begin{align*}
  \frac12\cW^2(\mu_0,\mu_1)
  & = \inf\limits_{\mu,V}\sup\limits_\phi\left\{\ip{\phi_1,\mu_1}-\ip{\phi_0,\mu_0}
    + \int_0^1\frac12\A(\mu_t,V_t)
    -\ip{\dot\phi_t,\mu_t}-\iip{\nabla\phi_t, V_t} \dd
    t\right\}
 \\& = \sup\limits_\phi\left\{\ip{\phi_1,\mu_1}-\ip{\phi_0,\mu_0}
    + \inf\limits_{\mu,V} \int_0^1\frac12\A(\mu_t,V_t)
    -\ip{\dot\phi_t,\mu_t}-\iip{\nabla\phi_t, V_t} \dd
    t\right\}.
\end{align*}
As the quantity to be minimised is positively $1$-homogeneous in $(\mu, V)$, the
infimum takes the value $-\infty$ if $\phi$ does not belong to $\cH$, the set of $C^1$ functions $\phi:[0,1]\to\R^{\cX}$ satisfying 
\begin{align*}
  \int_0^1\frac12\A(\mu_t,V_t)
  -\ip{\dot\phi_t,\mu_t}-\iip{\nabla\phi_t, V_t} \dd t \geq 0 
\end{align*}
for all $\mu : [0,1] \to \R_+^\cX$ and all $V : [0,1] \to \R^{\cX
  \times \cX}$. 
Consequently, 
\begin{align*}
  \frac12\cW(\mu_0,\mu_1)^2=\sup\Big\{\ip{\phi_1,\mu_1}-\ip{\phi_0,\mu_0}
    \ : \ \phi\in\cH\Big\}\;.
\end{align*}
A simple localisation argument in $t$ shows that $\phi\in\cH$ iff for all $t\in[0,1]$ and $(\mu,V)\in
\R_+^\cX\times\R^{\cX\times\cX}$:
\begin{align*}
 \frac12\A(\mu,V) -\ip{\dot\phi_t,\mu} - \iip{\nabla\phi_t,V} \geq 0 \;.
\end{align*}
We may write 
\begin{align*}
 \frac12\A(\mu,V) - \iip{\nabla\phi_t,V}
  &= \frac14\sum_{x,y}\frac{\big(V(x,y) - \hmu(x,y) \nabla\phi_t(x,y)\big)^2}{\hmu(x,y)}
 - \frac12\norm{\nabla\phi_t}^2_\mu\;.
\end{align*}
Minimising over $V$ we conclude that $\phi \in \cH$ iff the
inequality
\begin{align*}
\ip{\dot\phi_t,\mu}  + \frac12\norm{\nabla\phi_t}^2_\mu \leq 0
\end{align*}
holds for all $\mu \in \R^\cX_+$ and $t \in [0,1]$, which means that $\phi \in \HJ_\cX$.

\medskip

We present a proof of Theorem \ref{thm:dual-W} using the Fenchel--Rockafellar duality theorem; see, e.g., \cite[Theorem 1.9]{Vil03}. Recall that given a normed vector
space $E$ with topological dual space $E^*$ and a proper convex function
$F: E \to \R \cup \{+\infty\}$, its Legendre--Fenchel transform
$F^* : E^* \to \R \cup \{+\infty\}$ is defined by
\begin{align*}
 F^* :E^*\to \R\cup\{+\infty\}\;, \qquad 
 F^*(x^*) := \sup\limits_{x \in E}\Big\{ \ip{x,x^*}  - F(x) \Big
\}\;.
\end{align*}

\begin{theorem}[Fenchel--Rockafellar duality]\label{thm:FR-duality}
  Let $E$ be a normed vector space and $E^*$ its topological dual. Let
  $F,G:E\to\R\cup\{+\infty\}$ be proper convex functions and denote by
  $F^*,G^*:E^*\to \R\cup\{+\infty\}$ their Legendre--Fenchel
  transforms. Assume that there is $z_0\in E$ such that $G$ is
  continuous at $z_0$ and $F(z_0),G(z_0)<\infty$. Then we have:
  \begin{align}\label{eq:FR-duality}
    \sup\limits_{z\in E}\Big[ - F(z) - G(z) \Big] = \min\limits_{z^*\in E^*} \Big[ F^*(z^*) + G^*(-z^*) \Big]\;.
  \end{align}
\end{theorem}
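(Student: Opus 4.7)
The plan is to prove the easy inequality $\leq$ directly from the definitions of the Legendre--Fenchel transforms, and then use a Hahn--Banach separation argument in $E \times \R$ to produce a $z^* \in E^*$ attaining the infimum on the right and realising the reverse inequality. For weak duality, I note that for every $z \in E$ and $z^* \in E^*$,
\begin{align*}
F^*(z^*) + G^*(-z^*) \geq \big(\ip{z^*,z} - F(z)\big) + \big(-\ip{z^*,z} - G(z)\big) = -F(z) - G(z),
\end{align*}
so setting $\alpha := \sup_z[-F(z) - G(z)]$, which is bounded below by $-F(z_0) - G(z_0) > -\infty$ and a priori lies in $(-\infty, +\infty]$, we already have $\inf_{z^*}[F^*(z^*) + G^*(-z^*)] \geq \alpha$. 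If $\alpha = +\infty$ the identity is trivial, so I may assume $\alpha \in \R$.

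For the reverse inequality I would introduce the two convex subsets of $E \times \R$,
\begin{align*}
\mathcal{C} := \{(z,\lambda) : F(z) \leq \lambda\}, \qquad \mathcal{D} := \{(z,\lambda) : \lambda \leq -\alpha - G(z)\},
\end{align*}
and verify two structural properties. First, $\mathcal{C} \cap \mathrm{int}(\mathcal{D}) = \emptyset$: any common point $(z,\lambda)$ would admit some $\delta > 0$ with $(z,\lambda + \delta) \in \mathcal{D}$, forcing $F(z) \leq \lambda < \lambda + \delta \leq -\alpha - G(z)$, contradicting the definition of $\alpha$. Second, $\mathrm{int}(\mathcal{D}) \neq \emptyset$: continuity of $G$ at $z_0$ yields a neighbourhood $U$ of $z_0$ on which $G$ is bounded by some $M$, so that $U \times (-\infty, -\alpha - M - 1)$ is an open subset of $\mathcal{D}$.

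The geometric Hahn--Banach theorem now provides a closed hyperplane separating $\mathcal{C}$ from $\mathrm{int}(\mathcal{D})$: there exist a nonzero pair $(\zeta,\beta) \in E^* \times \R$ and $\gamma \in \R$ with $\ip{\zeta,z} + \beta\lambda \geq \gamma$ on $\mathcal{C}$ and $\ip{\zeta,z'} + \beta\lambda' \leq \gamma$ on $\mathcal{D}$. Sending $\lambda \to +\infty$ on $\mathcal{C}$ forces $\beta \geq 0$; the case $\beta = 0$ would make $\zeta$ bounded above on the neighbourhood $U$ of $z_0$, hence force $\zeta = 0$ and contradict nontriviality. Dividing through by $\beta > 0$ and setting $z^* := -\zeta/\beta$, the two separation inequalities become
\begin{align*}
F^*(z^*) \leq -\gamma/\beta, \qquad G^*(-z^*) \leq \gamma/\beta + \alpha,
\end{align*}
whose sum is $\alpha$; combined with weak duality this yields the identity and shows the infimum on the right is attained at this $z^*$. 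The main obstacle is the exclusion $\beta \neq 0$: this is precisely where the continuity hypothesis on $G$ at $z_0$ enters essentially, functioning as the qualification condition that closes the duality gap.
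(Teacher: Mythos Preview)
The paper does not prove this theorem; it is quoted as a standard tool with a reference to \cite[Theorem 1.9]{Vil03} and then applied to establish the duality formula for $\cW$ (Theorem~\ref{thm:dual-W}). So there is no ``paper's own proof'' to compare against.

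Your argument is the classical Hahn--Banach separation proof and is correct. A couple of minor remarks: in the step excluding $\beta = 0$, the assertion that a linear functional bounded above on a neighbourhood must vanish deserves one more line---after translating, $\zeta$ is bounded above on a ball around the origin, hence on both $tw$ and $-tw$ for every direction $w$, which forces $\ip{\zeta,w}=0$. Also, when you pass from the separation inequality on $\mathrm{int}(\mathcal{D})$ to the inequality on all of $\mathcal{D}$, you are implicitly using that a convex set with nonempty interior satisfies $\mathcal{D} \subseteq \overline{\mathrm{int}(\mathcal{D})}$; this is standard but worth noting. With these points made explicit, the proof is complete and matches the argument the paper defers to via citation.
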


\begin{proof}[Proof of Theorem \ref{thm:dual-W}]
Let us first note that, by the convexity of the constraint \eqref{eq:HJ}, any $\phi\in \HJ^1_\cX$ can be approximated uniformly by $C^1$ functions satisfying \eqref{eq:HJ} by convolution (after scaling the function to a slightly larger interval $[-\delta,1+\delta]$ via Remark \ref{rem:HJ-scaling}). Therefore, the final part of the theorem follows.

To show the dual representation with $C^1$ functions, we will apply Theorem \ref{thm:FR-duality} in the following situation. 
Let $E$ be the Banach space 
  \begin{align*}
    E = C^1\big([0,1],\R^\cX\big) \times L^2\big((0,1),\R^{\cX\times\cX}\big) \;.
  \end{align*}
Since we can identify $C^1\big([0,1],\R^{\cX}\big)$ with $\R^\cX \times C^0\big([0,1],\R^{\cX}\big)$ via the map
$I : \phi \mapsto (\phi_0, \dot\phi)$,
the dual space $E^*$ can be identified with
  \begin{align*}
    E^*= \R^\cX \times M\big([0,1],\R^\cX\big)  \times L^2\big((0,1),\R^{\cX\times\cX}\big)\;,
  \end{align*}
where the duality pairing between $(\phi, \Phi)\in E$ and $(b,\sigma,V)\in E^*$ is given by
  \begin{align*}
    \ip{(\phi,\Psi), (b,\sigma,V)} &=
    \ip{\phi_0,b} 
     + \int_0^1\ip{\dot\phi_t,\dd\sigma(t)}
     + \int_0^1\iip{\Phi_t,V_t}\dd t\;,
  \end{align*}
  keeping in mind that $\sigma$ is a vector-valued measure.

Define the functionals $F,G: E \to \R \cup\{+\infty\}$ by
  \begin{align*}
    F(\phi,\Phi) &=
    \begin{cases}
      -\ip{\phi_1,\mu_1} + \ip{\phi_0,\mu_0}\;, & \Phi = \nabla\phi\;,\\
       +\infty\;, & \text{otherwise}\;,
    \end{cases}\\
   G(\phi,\Phi) &=
    \begin{cases}
      0\;, & (\phi,\Phi)\in \cD\;,\\
       +\infty\;, & \text{otherwise}\;.
    \end{cases}   
  \end{align*}
  Here we say that a pair $(\phi,\Phi)\in E$ belongs to $\cD$ if for
  all~$t\in(0,1)$ and all $\mu\in\PX$ we have
  $\ip{\dot\phi_t,\mu}+\frac12 \|\Phi_t\|_\mu^2 \leq 0$. It is readily
  checked that $F$ and $G$ are convex.  Moreover, setting
  $\phi_0(t) = t(-1,\dots,-1)$ and $\Phi_0 \equiv 0$, both $F$ and $G$
  are finite at $(\phi_0,\Phi_0)$ and $G$ is continuous. Note that the
  supremum in the left-hand side of \eqref{eq:FR-duality} coincides
  with the supremum in the right-hand side of \eqref{eq:dual-W}.

We will calculate the Legendre--Fenchel transforms of $F$ and $G$. For $F$ we obtain
 \begin{align}\nonumber
 F^*(b,\sigma,V)&=
   \sup\limits_{(\phi,\Phi)\in E} 
   \left\{ \bip{(\phi,\Phi), (b,\sigma,V)} 
   	- F(\phi,\Phi) 
	\right\}\\\nonumber
   &= \sup\limits_{\phi} \left\{
      \ip{\phi_0, b} 
      + \int_0^1 \ip{\dot\phi_t,\ddd\sigma(t)} +
     \int_0^1\iip{\nabla\phi_t,V_t}\dd t +\ip{\phi_1,\mu_1} - \ip{\phi_0,\mu_0} 
     \right\}.
 \end{align}
Thus, by homogeneity of the last expression in $\phi$, one has $F^*(b,\sigma,\nu)= + \infty$ unless $(\sigma, V)$ satisfies the continuity equation $\partial_t \sigma + \nabla \cdot V = 0$ with boundary values $-(\mu_0-b)$ and $-\mu_1$, in the sense that
 \begin{align}\label{eq:weaker-ce}
   \ip{\phi_1,-\mu_1} - \ip{\phi_0,-(\mu_0-b)}
   =\int_0^1\ip{\dot\phi_t,\dd\sigma(t)} 
     + \int_0^1\iip{\nabla\phi_t,V_t}\dd t
 \end{align}
for all $\phi\in C^1\big([0,1],\R^\cX\big)$. In particular, the distributional derivative of $\sigma$ belongs to $L^2([0,1];\R^\cX)$. Since the antiderivative of a distribution is unique up to a constant,  the fundamental theorem of Lebesgue calculus implies that $\sigma$ has the form $\mathrm{d}\sigma(t)=\sigma_t\dd t$ for some curve $(\sigma_t)_t\in H^1([0,1];\R^\X)$. Moreover, (\ref{eq:weaker-ce}) implies $\sigma_0=-(\mu_0-b)$ and $\sigma_1=-\mu_1$.
 Thus, we obtain
 \begin{align}  \label{eq:F-dual}
  F^*(b,\sigma,V)&=
  \begin{cases}
    0\;, & (-\sigma,-V)\in\CE'(\mu_0-b,\mu_1)\;,\\
    +\infty\;, & \text{otherwise}\;,
  \end{cases}
 \end{align}
where $\CE'$ is defined by dropping the positivity and normalisation condition $(i)$ in the definition of $\CE$, and we have identified the measure $\sigma$ with the $H^1$-map $\sigma_t$. 

As it suffices to calculate the transform of $G$ at points $(\sigma,b,V)$ where $F^*$ is finite,  we can assume that $\ddd \sigma(t)= \sigma_t\dd t$ with $(\sigma_t)_{t}\in H^1([0,1];\R^\X)$. We claim that:
 \begin{align} \label{eq:G-dual}
  G^*(b,\sigma,V)
  =
  \begin{cases}
    \frac12\int_0^1\cA(\sigma_t,V_t)\dd t\;, & b=0\;,\\
    +\infty\;, & \text{otherwise}\;.
  \end{cases}
 \end{align}                
 Indeed, it follows that
 \begin{align*}
   G^*(b,\sigma,V)&= 
   \sup\limits_{(\phi,\Phi)\in E} 
   \left\{ \bip{(\phi,\Phi), (b,\sigma,V)}
		 	- G(\phi,\Phi) \right\}
			\\
   &=\sup\limits_{(\phi,\Phi)\in\cD} \left\{ \ip{\phi_0, b} 
   		+ \int_0^1\ip{\dot\phi_t, \sigma_t} 
		+ \iip{\Phi_t,V_t}\dd t \right\}\;.
  \end{align*}
Since $(\phi, \Phi) \in \cD$ implies $(\phi + c, \Phi) \in \cD$ for all $c \in \R^\cX$, we have $G^*(b,\sigma,V)=+\infty$ unless $b=0$. Moreover, from the definition of $\cD$ we infer that $G^*(b,\sigma,V)=+\infty$ unless $\sigma_t\in\R_{+}^\cX$ for a.e.~$t$.

Let us assume that $b=0$ and $\int_0^1\cA(\sigma_t,V_t)\dd t<\infty$. Then we obtain
\begin{equation}\begin{aligned}
\label{eq:G-dual2}
   G^*(0,\sigma,V) 
      &=\sup\limits_{(\phi,\Phi)\in\cD} 
      \left\{ \int_0^1\ip{\dot\phi,\sigma_t} + \iip{\Phi_t,V_t} \dd t \right\}\\
    &\leq\sup\limits_{(\phi,\Phi)\in\cD} \bigg\{ \int_0^1  - \frac12\|\Phi_t\|_{\sigma_t}^2 + 
\iip{\Phi_t,V_t}  \dd t \bigg\}  
 \leq  \frac12 \int_0^1 \cA(\sigma_t, V_t) \dd t
\;, 
\end{aligned}\end{equation}
where the first inequality follows from the definition of $\cD$ and
the second from \eqref{eq:CS}.
  
It remains to show that we have in fact equality. First we consider a
convolution in time yielding smooth pairs $\sigma^\eps_t$, $V^\eps_t$
converging to $\sigma_t$, $V_t$ as $\eps \to 0$.  Then we set for
$\delta > 0$, $\sigma^{\delta,\eps}_t = \sigma_t^\eps + \delta\pi$.
By convexity of the action and monotonicity of the mean $\Lambda$ we
have
   \begin{align}\label{eq:majorant}
    \int_0^1\cA(\sigma^{\delta,\eps}_t,V^{\eps}_t)\dd t 
    \leq \int_0^1\cA(\sigma^{\eps}_t,V^{\eps}_t)\dd t
   \leq
    \int_0^1\cA(\sigma_t,V_t)\dd t\;.
   \end{align}
   The convexity and lower semicontinuity of $A$ further implies the lower semicontinuity of the action; see \cite[Theorem 3.4.3]{But89} for a general result on lower semicontinuity of integral functionals and the proof of \cite[Theorem 3.2]{ErMa12} for the application to the action functional $\cA$. Consequently, 
\begin{align*}
 \int_0^1\cA(\sigma_t,V_t)\dd t=
   \lim_{\delta\to0} \lim_{\eps\to0} \int_0^1\cA(\sigma^{\delta,\eps}_t,V^{\eps}_t)\dd t \;.
\end{align*} 
Now, we can choose in particular
  $(\phi^{\delta,\eps},\Phi^{\delta,\eps})$ such that
  \begin{align*}
    \Phi^{\delta,\eps}_t = \frac{V^{\eps}_t}{\hat \sigma^{\delta,\eps}_t}\;,\quad \dot\phi^{\delta,\eps}(x)=-\frac12\sum_y\partial_1\Lambda\big(\rho^{\delta,\eps}_t(x),\rho^{\delta,\eps}_t(y)\big)\abs{\Phi^{\delta,\eps}_t(x,y)}^2Q(x,y)\;,
  \end{align*}
  where $\sigma^{\delta,\eps}=\rho^{\delta,\eps}\pi$. 

  We claim that $(\phi^{\delta,\eps},\Phi^{\delta,\eps})\in \cD$. To
  see this, we use the inequality
  \begin{align*}
  \partial_1\Lambda(s,t)u+\partial_2\Lambda(s,t)v\geq
  \Lambda(u,v) \quad\forall s,t>0\;,\ u,v\geq0\;,
  \end{align*}
  which is an identity for $s=v, t=u$, see \cite[Lemma 2.2]{ErMa12}.
  From this we infer that for any $\mu=\tilde\rho\pi\in\PX$ we have
\begin{equation}\begin{aligned}\label{eq:HJ-eps-delta}
    \ip{\dot\phi_t^{\delta,\eps},\mu} 
    &= -\frac12\sum_{x,y}\partial_1\Lambda\big(\rho_t^{\delta,\eps}(x),\rho_t^{\delta,\eps}(y)\big)\tilde\rho(x)\abs{\Phi^{\delta,\eps}_t(x,y)}^2Q(x,y)\pi(x) \\
  &= -\frac14\sum_{x,y}\Big[\partial_1\Lambda\big(\rho^{\delta,\eps}(x),\rho^{\delta,\eps}(y)\big)\tilde\rho(x)+\partial_2\Lambda\big(\rho^{\delta,\eps}(x),\rho^{\delta,\eps}(y)\big)\tilde\rho(y)\Big]
  \\& \qquad\qquad \times\abs{\Phi_t^{\delta,\eps}(x,y)}^2 Q(x,y)\pi(x)\\
&\leq -\frac14\sum_{x,y}\Lambda\big(\tilde\rho(x),\tilde\rho(y)\big)\abs{\Phi^{\delta,\eps}_t(x,y)}^2 Q(x,y)\pi(x) = -\frac12\| \Phi^{\delta,\eps}_t\|_\mu^2\;,
\end{aligned}\end{equation}
which proves the claim. 
Note that for $\tilde\rho=\rho^{\delta,\eps}_t$ we obtain equality.

Next we claim that 
  \begin{align*}
    &\lim_{\delta\to0} \lim_{\eps\to0}\int_0^1\ip{\dot\phi^{\delta,\eps},\sigma_t}\dd t=-\frac12\int_0^1\cA(\sigma_t,V_t)\dd t\;.
  \end{align*}
To prove this, we compare the left-hand side and the second line in \eqref{eq:HJ-eps-delta}.
The limit $\eps \to 0$ is justified by dominated convergence, since \eqref{eq:HJ-eps-delta} yields the majorant
\begin{align*}
	\frac12 \| \Phi^{\delta,\eps}_t\|_{\sigma_t}^2
	\leq \frac{C}{\delta} \cA(\sigma_t^\eps, V_t^\eps) \;,
\end{align*}
where $C$ depends on $Q$ and $\pi$. The right-hand side converges as $\eps \to 0$ by \eqref{eq:majorant}. 
The limit $\delta\to0$ is justified by monotone convergence.
Similarly, we have
  \begin{align*}
    \lim_{\delta\to0} \lim_{\eps\to0} \int_0^1\iip{V_t, \Phi_t^{\eps,\delta}} \dd t 
    = \int_0^1\cA(\sigma_t,V_t)\dd t\;.
  \end{align*}
  Here, we can use the estimate $|ab|\leq\frac12a^2+\frac12b^2$ to
  obtain a majorant that converges by \eqref{eq:majorant} as
  before. Thus the expression in the first braced bracket of
  \eqref{eq:G-dual2} converges to the right-hand side of
  \eqref{eq:G-dual2} with this choice of
  $(\phi^{\delta,\eps},\Psi^{\delta,\eps})$ as $\delta,\eps\to0$. A
  similar argument yields $G^*(0,\sigma,V)=\infty$ if
  $\int_0^1\cA(\sigma_t,V_t)\dd t=\infty$.
Combining \eqref{eq:F-dual}, \eqref{eq:G-dual} and the fact that $\cA(\sigma,V)=+\infty$ unless $\sigma\in\R^\cX_+$, we obtain
 \begin{align*}
   F^*(-b,-\sigma,-V)+ G^*(b,\sigma,V) =
   \begin{cases}
     \frac12\int_0^1\cA(\sigma_t,V_t)\dd t\;, & (\sigma,V)\in\CE(\mu_0,\mu_1),\ b=0\;,\\
     +\infty\;, & \text{otherwise}\;.
   \end{cases}
 \end{align*}
 Thus the supremum in the right-hand side of \eqref{eq:FR-duality}
 coincides with $\frac12\cW(\mu_0,\mu_1)^2$. An application of Theorem
 \ref{thm:FR-duality} concludes the proof.
\end{proof}

\section{Locality of optimal curves}
\label{sec:local}

In this section we investigate locality properties for discrete
transport geodesics. More precisely, we study the following
question: Given two probability measures supported in a subset $\cY$ of a state space $\cX$, is there an optimal curve connecting them that is supported in $\cY$? The crucial tool to analyse this question is the dual characterisation of the transport problem given in the previous section. We prove two types of results. 

Firstly, we show that the question can be answered affirmatively,
under a simple condition (the \emph{retraction property} of the
subgraph $\cY$), which will be introduced below.  This property
ensures that any competitor in the dual problem on the subgraph can be
extended to the full graph.  We present several examples where this
property is satisfied.  
Later, in Section \ref{sec:triangle}, we will show that locality may fail if the retraction property is not
satisfied.

\medskip

We start by introducing the retraction property and we give several
examples.
To increase readability, we often write subscripts instead of parentheses, e.g., $Q_{xy} = Q(x,y)$.

\medskip

A subset $\cY \subseteq \cX$ is said to be \emph{connected} if any two distinct points $y, y' \in \cY$ can be connected by a path $\{y_i\}_{i=0}^n \subseteq \cY$ satisfying $y_0 = y$, $y_n = y'$, and $Q(y_{i-1}, y_i) > 0$ for $i=1,\ldots,n$.

\begin{definition}[Retraction property]\label{def:extension}
A  connected subset $\Y \subseteq \X$ has the \emph{retraction property} if there exists a map $T\colon \X\to \Y$ such that
\begin{enumerate}
\item[(R1)] $T(y)=y$ for all $y\in \Y$;
\item[(R2)] For all $y,y' \in \cY$ with $y\neq y'$, and all $x \in T^{-1}(y)$, we have
  \begin{align*}
  \sum_{x'\in T^{-1}(y')} Q(x,x') \leq Q(y,y')\;.
  \end{align*}
\end{enumerate}
The map $T$ is called a \emph{retraction} of $\X$ onto $\Y$.
\end{definition}

\begin{remark}\label{rem:simple-retraction}
If the Markov triple $(\X,Q,\pi)$ corresponds to a simple random walk (i.e., $Q(x, y) \in \{0,1\}$ for all $x, y \in \cX$), the retraction property can be rephrased in graph theoretical terms. Indeed, it is readily verified that the retraction property holds if and only if there exists a map $T : \cX \to \cY$ with the following properties:
\begin{enumerate}
\item[\emph{(R1}$'$\emph{)}] $T(y) = y$ for all $y \in \cY$;
\item[\emph{(R2}$'$\emph{)}] If $x \sim x'$, then $T(x) = T(x')$ or $T(x) \sim T(x')$;
\item[\emph{(R3}$'$\emph{)}] If $x_1' \sim x$, $x_2' \sim x$, and $T(x_1') = T(x_2')$ for some $x_1' \neq x_2'$, then $T(x) = T(x_1')$.
\end{enumerate}
\end{remark}

\begin{definition}[Restriction]\label{def:restriction}
The \emph{restriction} of a Markov triple $(\X,Q,\pi)$ to a connected subset $\cY \subseteq \cX$ is the Markov triple $(\cY,Q|_{\cY},\pi|_{\cY})$, where $Q|_{\cY}$ is the restriction of $Q$ to $\cY \times \cY$, and $\pi|_{\cY}$ is the normalised restriction of $\pi$ to $\cY$. 
\end{definition}

Connectedness of $\Y$ implies that the Markov triple $(\cY,Q|_{\cY},\pi|_{\cY})$ is irreducible, and the detailed balance relation is obviously inherited. The following result implies that if $\cY$ has the retraction property as a subset of $\cX$, it also has this property as a subset of any set $\cX'$ with $\cY \subseteq \cX' \subseteq \cX$.

\begin{lemma}\label{lem:domain-monotonicity}
Let $(\cX, Q, \pi)$ be a Markov triple and $\cY \subseteq \cX' \subseteq \cX$. If $T : \cX \to \cY$ is a retraction, then its restriction $T|_{\cX'} : \cX' \to \cY$ is a retraction as well.
\end{lemma}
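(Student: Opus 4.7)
The plan is to simply verify properties (R1) and (R2) directly for the map $T|_{\cX'}$ with respect to the restricted Markov triple $(\cY, Q|_{\cY}, \pi|_{\cY})$, using the assumed properties of $T : \cX \to \cY$. Since $\cY \subseteq \cX' \subseteq \cX$, the target space is unchanged, so there is nothing subtle about well-definedness; the work is purely combinatorial.

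For (R1), I would note that for every $y \in \cY$ the inclusion $\cY \subseteq \cX'$ gives $y \in \cX'$, hence $T|_{\cX'}(y) = T(y) = y$ by (R1) for $T$. For (R2), the central observation is the set-theoretic identity
\begin{equation*}
(T|_{\cX'})^{-1}(y') = T^{-1}(y') \cap \cX' \subseteq T^{-1}(y'),
\end{equation*}
and the fact that the relevant transition rates coincide: $Q|_{\cX'}(x, x') = Q(x, x')$ for $x, x' \in \cX'$, and $Q|_{\cX'}(y, y') = Q(y, y')$ because $y, y' \in \cY \subseteq \cX'$. Fix distinct $y, y' \in \cY$ and $x \in (T|_{\cX'})^{-1}(y) \subseteq T^{-1}(y)$. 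Since all summands $Q(x, x')$ are nonnegative, extending the sum from $(T|_{\cX'})^{-1}(y')$ to $T^{-1}(y')$ can only increase it, so
\begin{equation*}
\sum_{x' \in (T|_{\cX'})^{-1}(y')} Q|_{\cX'}(x, x')
= \sum_{x' \in T^{-1}(y') \cap \cX'} Q(x, x')
\leq \sum_{x' \in T^{-1}(y')} Q(x, x')
\leq Q(y, y'),
\end{equation*}
where the last inequality is (R2) applied to $T$. This is exactly (R2) for $T|_{\cX'}$.

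There is no real obstacle here; the statement is essentially a consequence of monotonicity of sums of nonnegative terms under inclusion of index sets. The only thing to double-check is that the restricted Markov triple genuinely satisfies the standing hypotheses (irreducibility and detailed balance) so that the retraction condition is even meaningful, but this is exactly the content of the remark following Definition \ref{def:restriction} and requires no further argument.
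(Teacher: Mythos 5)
Your proof is correct and is exactly the argument the paper has in mind; the paper simply says ``This follows immediately from the definition,'' and your write-up fills in the (R1) and (R2) verifications that make this immediate. Nothing is missing.
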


\begin{proof}
This follows immediately from the definition.
\end{proof}

We present some examples of sets with the retraction property.

\begin{example}[Cycle]\label{ex:cycle}
For $n \geq 2$, let $\X=\Z/n\Z$, and set $Q_{j,j+1}=Q_{j+1,j}=1$ and $Q_{ij}=0$ otherwise. All computations are to be understood modulo $n$. We claim that the subset $\{1,\dots,k\}$ of $\cX$ has the retraction property if and only if $2k \leq n$. In this case, a retraction is given as follows (cf. Figure \ref{fig:nonagon}):
\begin{align*}
T : \cX \to \{1,\dots,k\}\;,\qquad 
	T(j) = 
	\begin{cases}j 
	       & \text{if } 1    \leq j \leq k \;,\\
 	2k-j+1 & \text{if } k+1  \leq j \leq 2k\;,\\
		 1 & \text{if } 2k+1 \leq j \leq n \;.
	\end{cases}
\end{align*}
\end{example}
Indeed, to check sufficiency, note that $(R1')$ is trivial, $(R2')$ holds since $n \geq 2k$, and $(R3')$ is readily checked as well. Necessity follows from a simple argument. 

\definecolor{qqqqff}{rgb}{0.,0.,1.}
\definecolor{ffqqqq}{rgb}{1.,0.,0.}
\definecolor{uuuuuu}{rgb}{0.26666666666666666,0.26666666666666666,0.26666666666666666}
\definecolor{qqffqq}{rgb}{0.,1.,0.}
\definecolor{ffffqq}{rgb}{1.,1.,0.}
\definecolor{ffffff}{rgb}{1.,1.,1.}
\begin{figure}[h]
\begin{tikzpicture}
\draw (250:1.5)--(290:1.5)--(330:1.5)--(10:1.5)--(50:1.5)--(90:1.5)--(130:1.5)--(170:1.5)--(210:1.5)--(250:1.5);
\draw[fill=black] (210:1.5) circle(1.5pt);
\draw (210:1.5) node[anchor=north east] {$y_1$};
\draw[fill=black] (250:1.5) circle(1.5pt);
\draw (250:1.5) node[anchor=north] {$y_2$};
\draw[fill=black] (290:1.5) circle(1.5pt);
\draw (290:1.5) node[anchor=north] {$y_3$};
\draw[fill=black] (330:1.5) circle(1.5pt);
\draw (330:1.5) node[anchor=north west] {$y_4$};
\draw[fill=black] (10:1.5) circle(1.5pt);
\draw (10:1.5) node[anchor=west] {$y_4$};
\draw[fill=black] (50:1.5) circle(1.5pt);
\draw (50:1.5) node[anchor=south west] {$y_3$};
\draw[fill=black] (90:1.5) circle(1.5pt);
\draw (90:1.5) node[anchor=south] {$y_2$};
\draw[fill=black] (130:1.5) circle(1.5pt);
\draw (130:1.5) node[anchor=south east] {$y_1$};
\draw[fill=black] (170:1.5) circle(1.5pt);
\draw (170:1.5) node[anchor=east] {$y_1$};
\draw[dashed](-2.5,-0.3) rectangle(2.5,-2);
\draw (2.5,-0.3) node [anchor= west] {$\Y$};
\end{tikzpicture}
\caption{Retraction of a 9-cycle $\cX$ onto a $4$-point set $\cY$. The labels indicate the image of the corresponding vertex under the retraction $T$.}\label{fig:nonagon}
\end{figure}
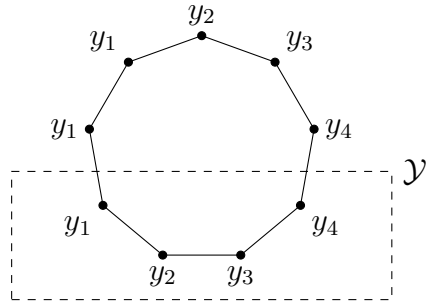

\begin{example}[Grid]\label{ex:grid}
Consider $\Z^d$ with the usual graph structure given by $Q_{xy}=1$ if $\abs{x-y}=1$ and $Q_{xy}=0$ otherwise. 
Let $\cY \subseteq \Z^d$ be a nonempty subset of the form $\cY = \cR \cap \Z^d$, where $\cR = \prod_{j=1}^d [a_j,b_j]$ is a hyperrectangle, and let $\cX$ be a connected subgraph of $\Z^d$ containing $\cY$.
We claim that $\cY$ has the retraction property. Indeed, it is readily checked that
a retraction from $\cX$ to $\cY$ can be obtained by mapping $x \in \cX$ to the point in $\cY$ that is closest to $x$ with respect to the Euclidean distance.
\end{example}

\begin{example}[2-point space]\label{ex:2pt}
  Assume that $Q$ takes values in $\{0, 1\}$ and let $x, y \in \cX$ with
  $Q_{xy} = 1$. A disjoint decomposition $\X=A_x\cup A_y$ with
  $x\in A_x$ and $y\in A_y$ is called an \emph{$x$-$y$ cut}.  An edge
  $(u,v) \in \cE$ is a \emph{cross} if $u \in A_x$ and
  $v\in A_y$. The subset $\{x,y\}$ has the retraction property if and
  only if there exists an $x$-$y$ cut such that no distinct crosses
  share a point. The correspondence between $x$-$y$ cuts with this
  property and retractions is given by $T^{-1}(x)=A_x$,
  $T^{-1}(y)=A_y$.
\begin{figure}[h!]
\begin{tikzpicture}[x=1cm,y=1cm]
\draw [fill=black] (0,0) circle (2.5 pt);
\draw (0,-0.1) node[anchor=north]{$x$};
\draw [fill=black] (2,0) circle (2.5 pt);
\draw (2,-0.1) node[anchor=north]{$y$};
\draw [fill=black] (3,1) circle (2.5 pt);
\draw [fill=black] (2,2) circle (2.5 pt);
\draw [fill=black] (0,2) circle (2.5 pt);
\draw [fill=black] (-1,1) circle (2.5 pt);
\draw (0,0)--(2,0);
\draw (2,0)--(3,1);
\draw (3,1)--(2,2);
\draw (2,2)--(0,2);
\draw (0,2)--(-1,1);
\draw (-1,1)--(0,0);
\draw[dashed] (0.5,-0.7)--(0.5,2.3);
\draw[dashed](0.5,2.3)--(-1.3,2.3);
\draw[dashed](-1.3,2.3)--(-1.3,-0.7);
\draw[dashed](-1.3,-0.7)--(0.5,-0.7);
\draw (-1.3,2.3) node[anchor=east]{$A_x$};
\draw[dashed] (1.5,-0.7)--(1.5,2.3);
\draw[dashed](1.5,2.3)--(3.3,2.3);
\draw[dashed](3.3,2.3)--(3.3,-0.7);
\draw[dashed](3.3,-0.7)--(1.5,-0.7);
\draw (3.3,2.3) node[anchor=west]{$A_y$};
\end{tikzpicture}

\caption{An $x$-$y$-cut associated with a retraction}
\end{figure}
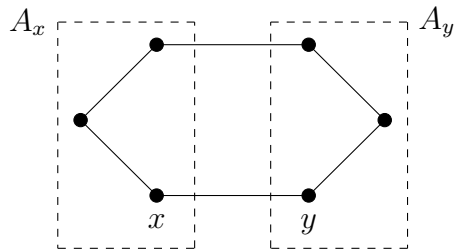
\end{example}

\begin{example}[Honeycomb lattice]\label{ex:honey}
Let $(\X,\mathcal{E})$ be a connected subgraph of the honeycomb lattice and define transition rates by setting $Q_{xy} = 1$ if $(x,y)\in\mathcal{E}$ and zero otherwise. 
Then each fundamental cell $\Y=\{y_1,\dots,y_6\}$ (see Figure \ref{fig:honeycomb}) has the retraction property. Indeed, to obtain a  retraction of $\cX$ onto $\cY$, we partition the plane into $6$ sectors separated by rays that originate at the centre of $\cY$ and intersect the midpoints of the sides of $\cY$ orthogonally. A retraction is then obtained by mapping each $x \in \cX$ to the unique $y \in \cY$ that belongs to the same sector (cf. Figure \ref{fig:honeycomb}).
\end{example}

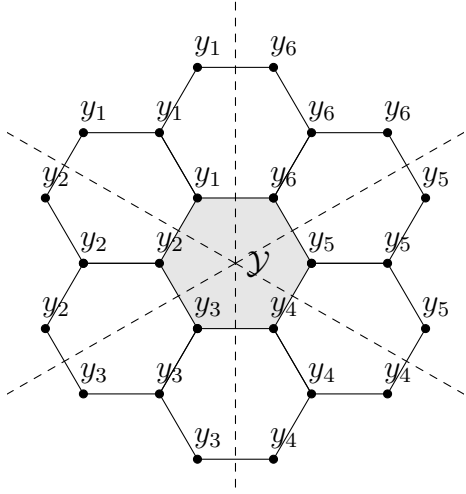
\begin{figure}[h!]
\begin{tikzpicture}[line cap=round,line join=round,>=triangle 45,x=1.0cm,y=1.0cm]
\clip(0,-3) rectangle (6,3.5);
\draw (1.,0.) -- (2.,0.) -- (2.5,0.8660254037844387) -- (2.,1.7320508075688776) -- (1.,1.7320508075688779) -- (0.5,0.8660254037844395) -- cycle;
\draw (2.,1.7320508075688776) -- (2.5,0.8660254037844387) -- (3.5,0.8660254037844386) -- (4.,1.7320508075688774) -- (3.5,2.5980762113533165) -- (2.5,2.598076211353317) -- cycle;
\fill[color=black,fill=black,fill opacity=0.1] (2.5,0.8660254037844387) -- (2.,0.) -- (2.5,-0.8660254037844386) -- (3.5,-0.866025403784439) -- (4.,0.) -- (3.5,0.8660254037844384) -- cycle;
\draw (4.,1.7320508075688774) -- (3.5,0.8660254037844386) -- (4.,0.) -- (5.,0.) -- (5.5,0.8660254037844374) -- (5.,1.7320508075688767) -- cycle;
\draw (4.,0.) -- (3.5,-0.866025403784439) -- (4.,-1.7320508075688776) -- (5.,-1.732050807568878) -- (5.5,-0.8660254037844397) -- (5.,0.) -- cycle;
\draw (4.,-1.7320508075688776) -- (3.5,-0.866025403784439) -- (2.5,-0.8660254037844393) -- (2.,-1.7320508075688779) -- (2.5,-2.5980762113533165) -- (3.5,-2.598076211353317) -- cycle;
\draw (2.5,-0.8660254037844386) -- (2.,0.) -- (1.,0.) -- (0.5,-0.866025403784438) -- (1.,-1.7320508075688763) -- (2.,-1.7320508075688772) -- cycle;
\draw[fill=black] (1.,0.) circle (1.5pt);
\draw(1.14,0.28) node {$y_2$};
\draw[fill=black](2.,0.) circle (1.5pt);
\draw (2.14,0.28) node {$y_2$};
\draw[fill=black] (2.5,0.8660254037844387) circle (1.5pt);
\draw (2.64,1.14) node {$y_1$};
\draw[fill=black](2.,1.7320508075688776) circle (1.5pt);
\draw (2.14,2.02) node {$y_1$};
\draw[fill=black](1.,1.7320508075688779) circle (1.5pt);
\draw (1.14,2.02) node {$y_1$};
\draw[fill=black](0.5,0.8660254037844395) circle (1.5pt);
\draw(0.64,1.14) node {$y_2$};
\draw[fill=black](3.5,0.8660254037844386) circle (1.5pt);
\draw (3.64,1.14) node {$y_6$};
\draw[fill=black](4.,1.7320508075688774) circle (1.5pt);
\draw(4.14,2.02) node {$y_6$};
\draw[fill=black](3.5,2.5980762113533165) circle (1.5pt);
\draw (3.64,2.88) node {$y_6$};
\draw[fill=black](2.5,2.598076211353317) circle (1.5pt);
\draw(2.64,2.88) node {$y_1$};
\draw(3.3,0) node {$\Y$};
\draw[fill=black](2.5,-0.8660254037844386) circle (1.5pt);
\draw[fill=black](3.5,-0.866025403784439) circle (1.5pt);
\draw(3.64,-0.58) node {$y_4$};
\draw[fill=black](4.,0.) circle (1.5pt);
\draw(4.14,0.28) node {$y_5$};
\draw[fill=black](3.5,0.8660254037844384) circle (1.5pt);
\draw[fill=black](4.,0.) circle (1.5pt);
\draw[fill=black](5.,0.) circle (1.5pt);
\draw(5.14,0.28) node {$y_5$};
\draw[fill=black](5.5,0.8660254037844374) circle (1.5pt);
\draw(5.64,1.14) node {$y_5$};
\draw[fill=black](5.,1.7320508075688767) circle (1.5pt);
\draw(5.14,2.02) node {$y_6$};
\draw[fill=black](4.,-1.7320508075688776) circle (1.5pt);
\draw(4.14,-1.46) node {$y_4$};
\draw[fill=black](5.,-1.732050807568878) circle (1.5pt);
\draw(5.14,-1.46) node {$y_4$};
\draw[fill=black](5.5,-0.8660254037844397) circle (1.5pt);
\draw(5.64,-0.58) node {$y_5$};
\draw[fill=black](2.5,-0.8660254037844393) circle (1.5pt);
\draw(2.64,-0.58) node {$y_3$};
\draw[fill=black](2.,-1.7320508075688779) circle (1.5pt);
\draw(2.14,-1.46) node {$y_3$};
\draw[fill=black](2.5,-2.5980762113533165) circle (1.5pt);
\draw(2.64,-2.32) node {$y_3$};
\draw[fill=black](3.5,-2.598076211353317) circle (1.5pt);
\draw(3.64,-2.32) node {$y_4$};
\draw[fill=black](1.,0.) circle (1.5pt);
\draw[fill=black](0.5,-0.866025403784438) circle (1.5pt);
\draw (0.64,-0.58) node {$y_2$};
\draw[fill=black](1.,-1.7320508075688763) circle (1.5pt);
\draw(1.14,-1.46) node {$y_3$};
\draw[dashed](3,0) -- +(30:10);
\draw[dashed](3,0) -- + (210:10);
\draw[dashed](3,0) -- + (150:10);
\draw[dashed](3,0) -- +(330:10);
\draw[dashed](3,10)--(3,-10);
\end{tikzpicture}
\caption{Part of the honeycomb lattice with a fundamental cell $\cY$. The  labels indicate the image of the corresponding vertex under the retraction.}\label{fig:honeycomb}
\end{figure}

\begin{example}[Trees]\label{ex:tree}
 Assume that the graph $(\cX, \cE)$ is a tree, i.e., it does not contain a cycle. Every subtree $\cY$ of $\cX$ has the retraction property, and a retraction can be constructed as follows: Fix a vertex $y \in \cY$. Since $\cX$ is a tree, for every $x\in \cX$ there is a unique path $\gamma$ without self-intersections connecting $x$ and $y$. The map assigning to $x$ the first point where the path $\gamma$ meets $\cY$ is a retraction of $\cX$ onto $\cY$. Note that the retraction property depends only on the graph $(\cX, \cE)$ and not on the choice of the transition rates $Q$ (as long as they give rise to the same graph).

\end{example}

\begin{theorem}(Extension of Hamilton--Jacobi subsolutions)
\label{thm:HJ-extension}
Let $(\cX, Q, \pi)$ be a Markov triple, and let $\Y$ be a connected subset of $\X$. If $\cY$ has the retraction property, then every Hamilton--Jacobi subsolution on $\cY$ can be extended to a Hamilton--Jacobi subsolution on $\cX$. 
\end{theorem}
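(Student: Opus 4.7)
The plan is to construct the extension explicitly via the retraction map: given a retraction $T : \cX \to \cY$ and a Hamilton--Jacobi subsolution $\phi$ on $\cY$, set
\begin{align*}
\tilde\phi_t(x) := \phi_t(T(x)) \qquad \text{for } x \in \cX.
\end{align*}
Property (R1) immediately gives $\tilde\phi_t|_{\cY} = \phi_t$, and the required temporal regularity $\tilde\phi \in H^1$ is inherited from $\phi$ since each value of $\tilde\phi_t$ is a single coordinate of $\phi_t$. The only substantial task is then to verify the Hamilton--Jacobi inequality
\begin{align*}
\langle \dot{\tilde\phi}_t, \mu\rangle + \tfrac12 \|\nabla\tilde\phi_t\|_\mu^2 \leq 0
\end{align*}
for every $\mu \in \cP(\cX)$ and a.e.\ $t$.

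The strategy is to reduce this inequality to the one that $\phi$ already satisfies on $\cY$, tested against the pushforward measure $\bmu := T_*\mu \in \cP(\cY)$, given by $\bmu(y) = \sum_{x \in T^{-1}(y)} \mu(x)$. Regrouping over fibres of $T$ immediately yields $\langle \dot{\tilde\phi}_t, \mu\rangle = \langle \dot\phi_t, \bmu\rangle$. Since $\nabla\tilde\phi_t(x,x') = 0$ whenever $T(x) = T(x')$, only pairs with $T(x) \neq T(x')$ contribute to the gradient term, and grouping by fibres gives
\begin{align*}
\|\nabla\tilde\phi_t\|_\mu^2 = \frac12 \sum_{\substack{y, y' \in \cY \\ y \neq y'}} |\nabla\phi_t(y,y')|^2 \sum_{\substack{x\in T^{-1}(y) \\ x' \in T^{-1}(y')}} \Lambda\big(\mu(x) Q(x,x'), \mu(x') Q(x',x)\big).
\end{align*}

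The crux is to bound the inner double sum by $\Lambda\big(\bmu(y) Q(y,y'), \bmu(y') Q(y',y)\big)$ for each pair $y \neq y'$. Three ingredients interlock here. First, $\Lambda$ is superadditive, i.e.\ $\sum_j \Lambda(a_j,b_j) \leq \Lambda(\sum_j a_j, \sum_j b_j)$, which is an elementary consequence of concavity, positive $1$-homogeneity, and $\Lambda(0,0) = 0$; this collapses the inner double sum into a single evaluation of $\Lambda$ at the aggregated arguments. Second, the retraction property (R2), in the form $\sum_{x' \in T^{-1}(y')} Q(x,x') \leq Q(y,y')$ for each $x \in T^{-1}(y)$, bounds these aggregates by $Q(y,y')\bmu(y)$ and $Q(y',y)\bmu(y')$ respectively. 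Third, monotonicity of $\Lambda$ in each variable produces the desired upper bound, and hence $\|\nabla\tilde\phi_t\|_\mu^2 \leq \|\nabla\phi_t\|_{\bmu}^2$.

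Combining these steps with the Hamilton--Jacobi inequality satisfied by $\phi$ evaluated at $\bmu \in \cP(\cY)$ yields
\begin{align*}
\langle \dot{\tilde\phi}_t, \mu\rangle + \tfrac12\|\nabla\tilde\phi_t\|_\mu^2 \leq \langle \dot\phi_t, \bmu\rangle + \tfrac12\|\nabla\phi_t\|_{\bmu}^2 \leq 0,
\end{align*}
which is precisely the subsolution property of $\tilde\phi$. The main obstacle is the gradient bound, where superadditivity, condition (R2), and monotonicity must interlock in the correct order; the rest of the argument is structural bookkeeping.
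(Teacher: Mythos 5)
Your proof is correct and follows essentially the same route as the paper's: extend via $\bar\phi := \phi \circ T$, push $\mu$ forward by $T$ to get $\bmu = T_\#\mu$, split the gradient norm over fibres of $T$, and combine superadditivity of $\Lambda$ (from concavity, $1$-homogeneity and $\Lambda(0,0)=0$), the retraction inequality (R2), and monotonicity of $\Lambda$ to obtain $\|\nabla\bar\phi_t\|_\mu^2 \leq \|\nabla\phi_t\|_{\bmu}^2$. The decomposition, the order of the three estimates, and the reduction to the Hamilton--Jacobi inequality tested at $\bmu$ all match the paper's argument.
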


\begin{proof}
Let $\phi$ be a Hamilton--Jacobi subsolution on $\cY$, and let $T$ be a retraction of $\X$ onto $\Y$.
Define $\bar \phi : \cX \to \R$ by $\bar \phi := \phi \circ T$, so that $\bar \phi|_\cY = \phi$ by (R1).
We will show that for any $\bar \nu \in \PX$, there exists $\nu \in \PY$ such that
\begin{equation}\label{eq:Hamiltonian_decr}
	 \ip{\dot{\bar\phi}_t,\bar\nu} 
 	+ \frac12 \|\nabla\bar\phi_t\|^2_{\bar\nu}
	\leq 
	 \ip{\dot\phi_t,\nu} 
 	+ \frac12 \|\nabla\phi_t\|^2_{\nu}
\end{equation}
for a.e. $t$.
To improve readability, we omit the subscript $t$. As $\phi \in \HJ_\cY$, the right-hand side of (\ref{eq:Hamiltonian_decr}) is nonpositive, so this suffices to prove the theorem.

For $\bar\nu\in\PX$ define $\nu \in \PY$ by $\nu := T_\# \bar \nu$. Clearly, 
\begin{align*}
	\ip{\dot{\bar{\phi}}, \bar \nu} 
	  = \ip{ {\dot\phi} \circ T , \bar \nu} 
	  = \ip{\dot{\phi}, T_\# \bar \nu} 
	  = \ip{\dot{\phi}, \nu} \;.
\end{align*}
It thus remains to show that $\|\nabla\bar\phi_t\|_{\bar\nu} \leq \|\nabla\phi_t\|_{\nu}$.

Splitting the sum we obtain
\begin{align*}
\|\nabla\bar\phi\|_{\bar\nu}^2 
&= \frac12  \sum_{x,x'\in \X}\Lambda(\bar\nu_x Q_{xx'}, \bar\nu_{x'} Q_{x'x})(\bar\phi_x - \bar\phi_{x'})^2
\\	&= \frac12 \sum_{\substack{y,y'\in \Y\\y\neq y'}}(\phi_y-\phi_{y'})^2\sum_{\substack{x\in T^{-1}(y)\\ x'\in T^{-1}(y')}}\Lambda(\bar\nu_x Q_{xx'}, \bar\nu_{x'} Q_{x'x} ) \;.
\end{align*}
The concavity and homogeneity of $\Lambda$ imply 
\begin{align*}
\begin{split}
&\sum_{\substack{x\in T^{-1}(y)\\ x'\in T^{-1}(y')}}\Lambda(\bar\nu_x Q_{xx'}, \bar\nu_{x'} Q_{x'x} )  
\\& \qquad \leq \Lambda\Bigg(\sum_{x\in T^{-1}(y)}\sum_{\substack{x'\in T^{-1}(y')}} \bar\nu_x Q_{xx'},
	\sum_{x'\in T^{-1}(y')}
\sum_{\substack{x\in T^{-1}(y)}} \bar\nu_{x'} Q_{x'x}\Bigg) \;.
\end{split}
\end{align*}
Given $x \in \cX$ and $y,y' \in \cY$ with  $y\neq y'$ and $T(x) = y$, the retraction property (R2) implies that $\sum_{x'\in T^{-1}(y')} Q_{xx'} \leq Q_{yy'}$ (and the same holds with primed and unprimed variables interchanged). 
Hence the monotonicity of $\Lambda$ yields
\begin{align*}
\Lambda\Bigg(&\sum_{x\in T^{-1}(y)}\sum_{\substack{x'\in T^{-1}(y')}} \bar\nu_x Q_{xx'},
	\sum_{x'\in T^{-1}(y')}
\sum_{\substack{x\in T^{-1}(y)}} \bar\nu_{x'} Q_{x'x}\Bigg) \\&\leq \Lambda\Bigg(Q_{yy'}\sum_{x\in T^{-1}(y)}\bar\nu_x, \ Q_{y'y} \sum_{x'\in T^{-1}(y')}\bar\nu_{x'}\Bigg)
=\Lambda(\nu_y Q_{yy'}, \nu_{y'} Q_{y'y} ) \;.
\end{align*}
Combining these inequalities, we infer that
\begin{align*}
\|\nabla\bar\phi\|_{\bar\nu}^2 
\leq \frac12\sum_{y,y'\in\Y}(\phi_y - \phi_{y'})^2\Lambda(\nu_y Q_{yy'}, \nu_{y'} Q_{y'y}) = 
\|\nabla\phi\|_{\nu}^2 \;,
\end{align*}
which completes the proof.
\end{proof}

The following result shows that any pair of measures supported in a
set $\cY$ with the retraction property, can be connected by a geodesic
supported in $\cY$.

\begin{theorem}[Weak locality under the retraction property]\label{thm:geodesics-extension}
Let $\Y$ be a subset of $\X$ with the retraction property. 
For all $\mu^0,\mu^1 \in \cP(\cX)$ with support in $\Y$ there exists a minimising $\W$-geodesic $(\mu_t)_{t \in [0,1]} \subseteq \cP(\cX)$ connecting $\mu^0$ and $\mu^1$ such that $\mu_t$ has support in $\Y$ for all $t\in[0,1]$.
\end{theorem}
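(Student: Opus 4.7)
The plan is to reduce the problem to the restricted Markov triple $(\cY,Q|_\cY,\pi|_\cY)$ from Definition~\ref{def:restriction}, whose associated discrete transport distance I denote by $\cW_\cY$. The key point to prove is that, for $\mu^0,\mu^1\in\cP(\cX)$ supported in $\cY$,
\[
  \cW(\mu^0,\mu^1)=\cW_\cY(\mu^0,\mu^1).
\]
Once this identity is in hand, the existence of minimisers for the variational problem~\eqref{eq:def-W} (recalled in Section~\ref{sec:notation}), applied on the restricted triple, yields a constant-speed $\cW_\cY$-geodesic $(\mu_t)_{t\in[0,1]}\subseteq\cP(\cY)$. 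Viewing it as a curve in $\cP(\cX)$, the equality $\cW(\mu_s,\mu_t)=\cW_\cY(\mu_s,\mu_t)$ for all $s,t$ forces it to be a constant-speed $\cW$-geodesic as well, and it is supported in $\cY$ by construction.

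The inequality $\cW\leq\cW_\cY$ is obtained by direct lifting. Given an admissible pair $(\mu,V)\in\CE_1(\mu^0,\mu^1)$ for the restricted triple, I would extend it by zero, setting $\mu_t(x)=0$ and $V_t(x,\cdot)=V_t(\cdot,x)=0$ for $x\notin\cY$. The continuity equation~\eqref{eq:cont-eq} is then immediate on $\cX$. Moreover, the definition~\eqref{eq:def-A} shows $A(s,t,0)=0$ for every $s,t\geq 0$, so every term of the action whose edge has an endpoint outside $\cY$ (where $V$ vanishes) contributes zero. Hence the actions on $\cX$ and on $\cY$ coincide along the lifted trajectory.

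The reverse inequality is where the retraction property enters, through duality. Theorem~\ref{thm:dual-W} applied on the restricted triple gives
\[
  \tfrac12\cW_\cY^2(\mu^0,\mu^1)
  =\sup_{\phi\in\HJ^1_\cY}\bigl(\ip{\phi_1,\mu^1}-\ip{\phi_0,\mu^0}\bigr).
\]
For each $\phi\in\HJ^1_\cY$, Theorem~\ref{thm:HJ-extension} produces an extension $\bar\phi\in\HJ^1_\cX$ with $\bar\phi|_\cY=\phi$. Since $\mu^0$ and $\mu^1$ are supported in $\cY$, the boundary pairings are unchanged: $\ip{\bar\phi_i,\mu^i}=\ip{\phi_i,\mu^i}$ for $i=0,1$. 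Applying the duality formula~\eqref{eq:dual-W} on $\cX$ to $\bar\phi$ then gives $\ip{\phi_1,\mu^1}-\ip{\phi_0,\mu^0}\leq\tfrac12\cW^2(\mu^0,\mu^1)$, and passing to the supremum yields $\cW_\cY\leq\cW$.

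All the real work is packaged in Theorem~\ref{thm:HJ-extension}; once that is available, the locality result is essentially a bookkeeping consequence of duality. The only additional verifications are the (already noted) facts that the restricted Markov triple is irreducible and reversible, and the harmless observation that the action functional~\eqref{eq:def-A} depends only on $\mu$ and $Q$, so that zero-extension and restriction interact trivially with~$\cA$.
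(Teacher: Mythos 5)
Your proof is correct and follows essentially the same route as the paper's: zero-extension of a $\cW_\cY$-geodesic to $\cP(\cX)$ (observing that the action is preserved because $A(s,t,0)=0$), combined with duality (Theorem~\ref{thm:dual-W}) on both $\cY$ and $\cX$ and the Hamilton--Jacobi extension result (Theorem~\ref{thm:HJ-extension}). The only cosmetic difference is that you package the argument as the explicit metric identity $\cW=\cW_\cY$ on $\cP(\cY)$ before invoking existence of minimisers, whereas the paper proves directly that the extended curve has action bounded by $\cW^2_\cX(\mu^0,\mu^1)$; both amount to the same two inequalities and the same use of the duality theorem, so this is a matter of presentation rather than substance.
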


\begin{proof} 
Let $(\mu_t)_t$ be a minimising geodesic in $\cP(\cY)$ satisfying the continuity equation \eqref{eq:cont-eq} with momentum vector field $(V_t)_t$. Consider the extension to $\X$ defined by $\bar\mu_t(x)=0$ if $x\notin \Y$ and $\bar V_t(x,x')=0$ if $x\notin \Y$ or $x'\notin \Y$. Clearly, $(\bar\mu_t,\bar V_t)_t$ has the same action as $(\mu_t,V_t)_t$.

Let $\eps > 0$. Since $(\mu_t,V_t)_t$ is a geodesic in $\PY$, Theorem \ref{thm:dual-W} (applied in $\cP(\cY)$) implies that there exists a Hamilton--Jacobi subsolution $\phi \in \HJ_\cY$ such that 
\begin{align*}
\ip{\phi_1,\mu^1} -\ip{\phi_0,\mu^0}
  \geq \frac12 \int_0^1 \cA(\mu_t,V_t) \dd t - \eps \;.
\end{align*}
By Theorem \ref{thm:HJ-extension}, $\phi$ can be extended to a Hamilton--Jacobi subsolution $\bar \phi \in \HJ_\cX$. In particular, using Theorem \ref{thm:dual-W} once more (this time in $\cP(\cX)$), 
\begin{align*}
\ip{\phi_1,\mu^1} -\ip{\phi_0,\mu^0}
= \ip{\bar\phi_1,\mu^1} -\ip{\bar\phi_0,\mu^0}
\leq  \frac12\cW_\cX^2(\mu^0,\mu^1) \;.
\end{align*}
Since $\eps > 0$ is arbitrary, it follows that $\int_0^1\A(\mu_t,V_t) \dd t \leq \cW_\cX^2(\mu^0,\mu^1)$, which yields the result.
\end{proof}

\section{Optimal transport avoids dead ends}
\label{sec:ends}
In this section we prove the intuitively natural statement that optimal curves do not transport mass into ``dead ends''. 
We formalise this concept by considering the gluing of two Markov triples along a vertex.

\begin{definition}[Gluing of Markov triples]\label{def:gluing}
Let $(\X_1,Q_1,\pi_1)$ and $(\X_2,Q_2,\pi_2)$ be Markov triples, and fix $x_1\in \X_1$, $x_2\in \X_2$. 
The \emph{gluing} of the two triples at $x_1, x_2$ is the Markov triple $(\cX,Q,\pi)$ defined by setting 
\begin{align*}
\X = (\X_1\sqcup \X_2) / \{x_1, x_2\}
\end{align*}
and $\ast = [x_1] = [x_2]$. 
For brevity, let us write $\cX_i' := \X_i \setminus \{x_i\}$.
We have canonical injections $\cX_1' \to \X$, $\cX_2' \to \X$, and we
identify elements of $(\X_1\sqcup \X_2) \setminus \{x_1,x_2\}$ with
their respective images.
We define transition rates $Q : \cX \times \cX \to \R$ by 
\begin{align*}
Q(x,y) = \left\{ \begin{array}{ll}
Q_i(x,y)
 & \text{if $x, y \in \X_i'$ }\;,\\
Q_i(x,x_i)
 & \text{if $x \in \cX_i'$ and $y = \ast$ },\\
Q_i(x_i,y)
 & \text{if $x = \ast$ and $y \in \cX_i'$ },\\
0
 & \text{otherwise}.\end{array} \right.
 \end{align*}
\end{definition}

It is easy to see that $Q$ is irreducible and reversible, and the unique
invariant probability measure is given by
\begin{align*}
\pi(x) =  \frac{1}{1 - \pi_1(\cX_1') \pi_2(\cX_2')} \times
\left\{ \begin{array}{ll}
\pi_1(x)\pi_2(x_2)
 & \text{if $x \in \cX_1'$}\;,\\
\pi_1(x_1)\pi_2(x)
 & \text{if $x \in \cX_2'$}\;,\\
\pi_1(x_1)\pi_2(x_2)
 & \text{if $x = \ast$}\;.\end{array} \right.
\end{align*}
\begin{definition}[Dead end]\label{def:dead-end}
Let $(\cX, Q, \pi)$ be a Markov triple, and let $\cX_1, \cX_2 \subseteq \cX$. We say that $\cX_2$ is a \emph{dead end} for $\cX_1$ (and vice versa) if the intersection of $\cX_1$ and $\cX_2$ contains exactly one point (denoted ``$\ast$''), and moreover, $Q(x,y) = Q(y,x) = 0$ whenever $x \in \cX_1'$ and $y \in \cX_2'$. Here, we write $\cX_i' = \cX_i \setminus \{ \ast \}$.
\end{definition}

\begin{remark}\label{rem:compatibility}
The notions of dead end and gluing of Markov triples are compatible in the following sense: Let $(\cX, Q, \pi)$ be a Markov triple, and suppose that $\cX_2 \subseteq \cX$ is a dead end for $\cX_1 \subseteq \cX$ with intersection point $\ast$. Then one recovers $(\cX, Q, \pi)$ by gluing together the restrictions of $\cX$ to $\cX_1$ and $\cX_2$ at $\ast$.
\end{remark}

\begin{proposition}\label{prop:gluing-extension}
Let $(\cX_1,Q_1,\pi_1)$ and $(\cX_2,Q_2,\pi_2)$ be Markov triples, and let $(\cX, Q, \pi)$ be the Markov triple obtained by gluing the triples at $x_1 \in \cX_1$ and $x_2 \in \cX_2$. Then $\X_1$ and $\X_2$ have the retraction property as subsets of $\X$. \end{proposition}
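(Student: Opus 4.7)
The plan is to verify the retraction property directly by writing down the natural candidate retraction and checking conditions (R1) and (R2) of Definition \ref{def:extension}. By the symmetric roles of $\cX_1$ and $\cX_2$, it suffices to treat $\cX_1$.

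The natural map to try is $T : \cX \to \cX_1$ defined by $T(y) := y$ if $y \in \cX_1$ and $T(y) := \ast$ if $y \in \cX_2'$; that is, one collapses the entire ``dead end'' $\cX_2$ onto the gluing point. Condition (R1) is immediate from this definition. For (R2), I would do a short case analysis on the pair $(y,y') \in \cX_1 \times \cX_1$ with $y \neq y'$, and $x \in T^{-1}(y)$. The key observation from Definition \ref{def:dead-end} / the gluing construction is that $Q(x,x') = 0$ whenever $x \in \cX_1'$ and $x' \in \cX_2'$, and vice versa.

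If $y \in \cX_1'$, then $T^{-1}(y) = \{y\}$, so $x = y$. When $y' \in \cX_1'$ as well, then $T^{-1}(y') = \{y'\}$ and $\sum_{x' \in T^{-1}(y')} Q(x,x') = Q(y,y')$, giving equality. When $y' = \ast$, then $T^{-1}(\ast) = \{\ast\} \cup \cX_2'$, and
\begin{align*}
\sum_{x' \in T^{-1}(\ast)} Q(y,x') = Q(y,\ast) + \sum_{x' \in \cX_2'} Q(y,x') = Q(y,\ast) = Q(y,y'),
\end{align*}
since the sum over $\cX_2'$ vanishes by the dead-end property. The remaining case is $y = \ast$, in which $x$ can be $\ast$ itself or any point of $\cX_2'$. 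If $x = \ast$, then $\sum_{x' \in T^{-1}(y')} Q(\ast,x') = Q(\ast,y') = Q(y,y')$ (using that $y' \in \cX_1'$ so $T^{-1}(y') = \{y'\}$). If $x \in \cX_2'$, then every $x' \in T^{-1}(y') = \{y'\} \subseteq \cX_1'$ satisfies $Q(x,x') = 0$ by the dead-end property, so $\sum_{x' \in T^{-1}(y')} Q(x,x') = 0 \leq Q(y,y')$.

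This exhausts all cases, so $T$ satisfies (R1) and (R2) and hence witnesses the retraction property for $\cX_1 \subseteq \cX$. The analogous map (collapsing $\cX_1'$ onto $\ast$) handles $\cX_2$. There is no genuine obstacle here; the only subtle point is keeping track of the fact that $T^{-1}(\ast)$ has more than one preimage, which is precisely where the dead-end assumption $Q(\cX_1', \cX_2') = 0$ is needed to kill the extra contributions.
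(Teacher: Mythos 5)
Your proof is correct and takes exactly the same approach as the paper: define $T$ to be the identity on $\cX_1$ and to collapse $\cX_2'$ to $\ast$, then verify (R1) and (R2) by cases. You have simply spelled out the case analysis that the paper leaves as a one-line exercise.
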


\begin{proof}
Define $T : \cX \to \cX_1$ by  $T(x) = x$ for $x \in \cX_1$ and $T(x) = \ast$ for $x \in \cX_2'$. One verifies that $T$ indeed defines a retraction by distinguishing cases.
\end{proof}

In view of Theorem \ref{thm:geodesics-extension}, the previous result implies that any two measures $\mu_0, \mu_1$ supported in (the image of) $\X_1$ can be connected by a geodesic that is supported in $\X_1$ for all times; i.e., weak locality holds. We will now show that in fact strong locality holds: \emph{any} geodesic connecting $\mu_0$ and $\mu_1$ has to be supported in $\X_1$.

\begin{theorem}\label{thm:no-dead-ends}
Let $(\cX_1,Q_1,\pi_1)$ and $(\cX_2,Q_2,\pi_2)$ be Markov triples, and let $(\cX, Q, \pi)$ be the Markov triple obtained by gluing the triples at $x_1 \in \cX_1$ and $x_2 \in \cX_2$. 
  If $(\mu_t)_{t\in[0,1]}$ is a geodesic in $(\P(\X),\W)$ with
  $\supp \mu_0, \supp \mu_1\subseteq \X_1$, then
  $\supp \mu_t\subseteq \X_1$ for all $t\in[0,1]$.
\end{theorem}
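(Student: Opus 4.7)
The plan is to push the geodesic through the retraction from Proposition~\ref{prop:gluing-extension} and then exploit minimality to propagate rigidity from the action functional back to the curve itself. Let $(V_t)_{t \in [0,1]}$ be an optimal momentum field associated to the geodesic, so that $(\mu, V) \in \CE_1(\mu_0, \mu_1)$ and $\int_0^1 \cA(\mu_t, V_t) \dd t = \cW^2(\mu_0, \mu_1)$. Let $T : \cX \to \cX_1$ be the retraction fixing $\cX_1$ and collapsing $\cX_2'$ to $\ast$. Set $\bar\mu_t := T_\# \mu_t$ and
\[
 \bar V_t(y, y') := \sum_{\substack{x \in T^{-1}(y) \\ x' \in T^{-1}(y')}} V_t(x, x') \qquad \text{for distinct } y, y' \in \cX_1\;.
\]
A short computation shows $(\bar\mu, \bar V) \in \CE_1(\mu_0, \mu_1)$: the endpoints are preserved since $\supp \mu_0, \supp \mu_1 \subseteq \cX_1$.

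The central estimate is the pointwise action bound $\cA(\bar\mu_t, \bar V_t) \leq \cA(\mu_t, V_t)$. First, $Q(x, y) = 0$ whenever $x \in \cX_1'$ and $y \in \cX_2'$ by the dead-end structure, which forces $V_t(x, y) = 0$ on such pairs (else $\cA = +\infty$), and therefore $\bar V_t$ coincides with $V_t$ on every edge of $\cX_1$. Split $\cA(\mu_t, V_t) = \cA^{(1)}_t + \cA^{(2)}_t$, where $\cA^{(1)}_t$ collects summands over edges with both endpoints in $\cX_1$ and $\cA^{(2)}_t$ those over edges with at least one endpoint in $\cX_2'$. Edges inside $\cX_1'$ contribute identically to $\cA(\bar\mu_t, \bar V_t)$ and $\cA^{(1)}_t$. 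For edges between $y \in \cX_1'$ and $\ast$, the identity $\bar\mu_t(\ast) = \mu_t(\ast) + \mu_t(\cX_2') \geq \mu_t(\ast)$ and the monotonicity of $\Lambda$ yield $\widehat{\bar\mu}_t(y,\ast) \geq \hmu_t(y,\ast)$, so the corresponding summand in $\cA(\bar\mu_t, \bar V_t)$ is no larger than the one in $\cA^{(1)}_t$. Hence $\cA(\bar\mu_t, \bar V_t) \leq \cA^{(1)}_t \leq \cA(\mu_t, V_t)$.

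Integrating in time and using the definition of $\cW$ in both directions forces $\int_0^1 \cA(\bar\mu_t, \bar V_t) \dd t = \cW^2(\mu_0, \mu_1)$, so the chain of inequalities is saturated for a.e.~$t$. In particular $\cA^{(2)}_t = 0$, which (since $A(s,t,w) = 0$ forces $w = 0$) gives $V_t(x, y) = 0$ for every $x \in \cX_2'$ and every $y \in \cX_2$. By the dead-end assumption all neighbours of any $x \in \cX_2'$ lie in $\cX_2$, so $(\nabla \cdot V_t)(x) = 0$ for a.e.~$t$, and the continuity equation gives $\ddt \mu_t(x) = 0$. Combined with $\mu_0(x) = 0$ for $x \in \cX_2'$, this yields $\mu_t(x) = 0$ for all $t \in [0,1]$ and all $x \in \cX_2'$, which is the claim.

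The main point requiring care is the action comparison: checking that the push-forward of the continuity equation is well-defined with non-increasing cost is essentially the "primal" counterpart of the estimate $\|\nabla \bar\phi\|_{\bar\nu} \leq \|\nabla \phi\|_\nu$ in the proof of Theorem~\ref{thm:HJ-extension}, and it rests on the same monotonicity and $1$-homogeneity of $\Lambda$. Once this is secured, the rigidity part is soft: minimality promotes the one-sided inequality to an equality, and the dead-end structure translates this equality into the desired localisation of the geodesic.
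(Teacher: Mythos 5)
Your proof is correct and follows essentially the same strategy as the paper's: define $\bar\mu_t = T_\#\mu_t$ and a restricted momentum field $\bar V_t$, verify that $(\bar\mu,\bar V)$ solves the continuity equation, show $\cA(\bar\mu_t,\bar V_t)\leq \cA(\mu_t,V_t)$ pointwise using monotonicity of $\Lambda$, and then derive a contradiction from optimality of $(\mu,V)$. Your observation that $V_t$ must already vanish on $\cX_1'$-$\cX_2'$ pairs (because $Q=0$ there and the action is finite) is the same fact the paper uses to define $\bar V_t$ by restriction, so the two momentum fields coincide.

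The one place where your route genuinely differs is the rigidity step at the end. The paper argues by contradiction: if $\mu_t(\cX_2')>0$ for some $t$, it locates a specific edge $(\ast,z)$ and a positive-measure set of times where $V_t(\ast,z)\neq 0$, producing a strict pointwise action inequality. You instead deduce from $\int_0^1\cA(\bar\mu_t,\bar V_t)\dd t=\cW^2(\mu_0,\mu_1)=\int_0^1\cA(\mu_t,V_t)\dd t$ that the pointwise inequality $\cA(\bar\mu_t,\bar V_t)\leq\cA^{(1)}_t\leq\cA(\mu_t,V_t)$ is saturated a.e., which forces $\cA^{(2)}_t=0$, hence $V_t$ vanishes on every edge meeting $\cX_2'$, hence $\ddt\mu_t(x)=0$ for $x\in\cX_2'$ by the continuity equation. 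This is a cleaner and slightly more informative reformulation of the same argument: it yields directly that the optimal momentum never transports across $\ast$ into $\cX_2'$, rather than just that $\mu_t$ has no mass there. Both arguments rest on the same decomposition of the action and the same monotonicity input, and neither is more elementary than the other, but your version avoids the slightly delicate existential reasoning about a positive-measure set of times.

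Two minor points worth flagging. First, in the monotonicity estimate $\widehat{\bar\mu}_t(y,\ast)\geq\hmu_t(y,\ast)$ one should note that when $\hmu_t(y,\ast)=0$ finiteness of the action already forces $V_t(y,\ast)=0$, so the edgewise inequality $A(\bar\mu_t(y)Q_{y\ast},\bar\mu_t(\ast)Q_{\ast y},\bar V_t(y,\ast))\leq A(\mu_t(y)Q_{y\ast},\mu_t(\ast)Q_{\ast y},V_t(y,\ast))$ holds in all cases, not only when both sides are positive quotients. Second, when you compute $\nabla\cdot V_t(x)$ for $x\in\cX_2'$ it is worth recording that $V_t(x,y)=0$ also for every non-neighbour $y$ (again by finiteness of the action), so the divergence reduces to a sum over $y\in\cX_2$, all of whose terms you have just shown vanish. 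Neither is a gap, but spelling these out makes the argument fully airtight.
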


\begin{proof}
Let $t \mapsto V_t \in \R^{\cX \times \cX}$ be an anti-symmetric momentum vector field such that $(\mu, V)$ is a solution to the continuity equation with $\int_0^1 \cA(\mu_t, V_t) \dd t = \cW^2(\mu_0, \mu_1)$.
We define a new curve $t \mapsto \bar \mu_t \in \cP(\cX)$ by
\begin{align*}
\bar\mu_t(x)=\begin{cases}
\mu_t(x)& \text{if } x\in \X_1' \;,\\
\mu_t(\ast)+\sum_{y\in \X_2'}\mu_t(y)& \text{if } x=\ast\;,\\
0& \text{otherwise},
\end{cases}
\end{align*}
and a new anti-symmetric momentum vector field $t \mapsto \bar V_t \in \R^{\cX \times \cX}$ by
\begin{align*}
\bar V_t(x,y)=\begin{cases}
V_t(x,y)& \text{if } x,y\in \X_1' \cup \{\ast\}\;,\\
0& \text{otherwise}.
\end{cases}
\end{align*}
We claim that $(\bar \mu, \bar V)$ solves the continuity equation \eqref{eq:cont-eq} as well. 

Indeed, this statement trivially holds for any $x \in \cX \setminus \{ \ast \}$. To prove the claim at $\ast$, we note that for any $y \in \cX_2'$, 
\begin{align*}
	\ddt\mu_t(y) = \sum_{x \in \cX_2' \cup \{\ast\}} V_t(x,y)\;.
\end{align*}
Therefore, using the anti-symmetry of $V_t$,
\begin{align*}
	\sum_{y \in \cX_2'} \ddt\mu_t(y) 
		= \sum_{y \in \cX_2'}  \sum_{x \in \cX_2' \cup \{\ast\}} V_t(x,y)
		= \sum_{y \in \cX_2'}  V_t(\ast,y) \;.
\end{align*}
Furthermore, 
\begin{align*}
		\ddt\mu_t(\ast) = \sum_{y\in \cX_1'}V_t(y,\ast)
						+ \sum_{y\in \cX_2'}V_t(y,\ast) \;,
\end{align*}
hence by another application of the anti-symmetry,
\begin{align*}
	\ddt\bar\mu_t(\ast) 
		&= \ddt \mu_t(\ast) + \sum_{y\in \cX_2'} \ddt\mu_t(y)
		 =  \sum_{y\in \cX_1'} V_t(y,\ast)
		 =  \sum_{y\in \cX} \bar V_t(y,\ast) \;,
\end{align*}
which proves the claim.

For all $t \in (0,1)$ and $x, y \in \cX$, we clearly have
\begin{align*}
 A(\mu_t(x) Q(x,y), \mu_t(y) Q(y,x), V_t(x,y))
\geq A(\bar\mu_t(x) Q(x,y), \bar\mu_t(y) Q(y,x),\bar V_t(x,y)) \;.
\end{align*}
Moreover, if $\mu_t(\cX_2') > 0$ for some $t \in (0,1)$, then there exists $z \in \cX_2'$ such that $V_t(\ast, z) > 0$ and $\Lambda(\mu_t(\ast) Q(\ast,z), \mu_t(z) Q(z,\ast)) > 0$ for all $t$ on a set of positive measure in $(0,1)$. Therefore, 
\begin{align*}
	 A(\mu_t(\ast) Q(\ast,z), \mu_t(z) Q(z,\ast), V_t(\ast,z))
	> 0 = A(\bar\mu_t(\ast) Q(\ast,z), \bar\mu_t(z) Q(z,\ast),\bar V_t(\ast,z)) \;.
\end{align*}
This strict inequality contradicts the fact that $(\mu_t)_{t\in[0,1]}$ is a geodesic.
\end{proof}

\section{Nonlocality of optimal transport on the triangle}
\label{sec:triangle}

Consider a Markov triple $(\cX, Q, \pi)$ and a connected subset $\cY \subseteq \cX$. 
In this section we show that locality of geodesics in $\cP(\cY)$ may fail if $\cY$ does not have the retraction property.
We consider the simplest possible setting, where $(\cX, Q, \pi)$ corresponds to simple random walk on a triangle, and $\cY \subseteq \cX$ is a two-point set. 
We show that the canonical lift of a geodesic between Dirac measures on the two-point space is \emph{not} an optimal curve in $\cP(\cX)$, by constructing a competitor that transports mass along all edges. 

Throughout this section we make the following additional assumption on the mean $\Lambda$. 

\begin{assumption} \label{ass:bdy}
For any $s>0$ we have  
\begin{align}\label{eq:mean-limit}
\Lambda(s,t)\to\infty \quad \text{as } t\to\infty\;.
\end{align}
If $\Lambda(0,t)>0$ for $t>0$, then \eqref{eq:mean-limit} also holds for $s=0$.
\end{assumption}

Clearly, this assumption is satisfied for the arithmetic, geometric, and logarithmic means, but not for the harmonic mean.

The main result of this section relies on the following lemma concerning the variation of the action functional on cycles of arbitrary length.

\begin{lemma}\label{lem:derivA}
For $n \geq 3$, let $\cX = \mathbb{Z}/n\mathbb{Z}$ be equipped with transition rates $Q_{ij}$ such that $Q_{i,i+1},Q_{i+1,i}>0$ for all $i\in \X$ and $Q_{ij}=0$ otherwise.
Let $\mu, \nu \in \PX$, and let $V,U \in \R^{\X\times\X}$ be anti-symmetric, and such that both $\A(\mu,V)$ and $\A(\nu,U)$ are finite. 
Assume that $\mu_1,\mu_2>0$ and $\mu_i = 0$ for all $i\neq 1,2$, $V_{12} \neq 0$, and $V_{ij} = 0$ for all $\{i,j\} \neq \{1,2\}$ and that $U_{12}=0$. For $\alpha\in[0,1]$ we define $\mu^\a=(1-\a)\mu+\a\nu$ and $V^\a=(1-\a)V+\a U$. 
Then we have:
  \begin{align}\label{eq:derivA}
    &\lim_{\a\to0}\frac1\a
      \Big(\A(\mu^\a,V^\a)-\A(\mu,V)\Big)
       \\ & = \nonumber
    \frac{-V_{12}^2}{\Lambda\big(\mu_1 Q_{12}, \mu_2 Q_{21}\big)}\bigg[1+\frac{\partial_1\Lambda\big(\mu_1Q_{12},\mu_2Q_{21}\big)\nu_1Q_{12}+\partial_2\Lambda\big(\mu_1Q_{12},\mu_2Q_{21}\big)\nu_2Q_{21}}{\Lambda\big(\mu_1 Q_{12}, \mu_2 Q_{21}\big)}\bigg]
    \\ & \nonumber
\qquad +\sum_{i=3}^{n-1}\frac{U_{i,i+1}^2}{\Lambda\big(\nu_iQ_{i,i+1},\nu_{i+1}Q_{i+1,i}\big)} \; .
  \end{align}
\end{lemma}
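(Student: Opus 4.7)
The plan is to decompose $\A(\mu^\alpha,V^\alpha)-\A(\mu,V)$ edge-by-edge along the cycle and analyse the three qualitatively distinct edge types. Using anti-symmetry of $V^\alpha$ together with symmetry of $\Lambda$, the ordered pair $(i,i+1)$ and its reverse combine into a single contribution
\[
E_i(\alpha):=\frac{(V^\alpha_{i,i+1})^2}{\Lambda\bigl(\mu^\alpha_i Q_{i,i+1},\mu^\alpha_{i+1}Q_{i+1,i}\bigr)}
\]
per unordered edge $\{i,i+1\}$, with the convention $A(s,t,0)=0$ from the definition of $A$ handling the degenerate cases in which this quotient is $0/0$. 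Because $V$ and $\mu$ are supported only on $\{1,2\}$, the baseline $\A(\mu,V)$ collapses to the single term $E_1(0)$, so the quantity to compute reduces to $E_1'(0)+\sum_{i\neq 1}\lim_{\alpha\to 0}E_i(\alpha)/\alpha$.

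For the edge $\{1,2\}$ both arguments of $\Lambda$ stay strictly positive at $\alpha=0$, so $E_1$ is smooth there and the chain rule yields, writing $\Lambda_0:=\Lambda(\mu_1 Q_{12},\mu_2 Q_{21})$,
\[
E_1'(0)=\frac{-2V_{12}^2}{\Lambda_0}-\frac{V_{12}^2\bigl[\partial_1\Lambda_0\,(\nu_1-\mu_1)Q_{12}+\partial_2\Lambda_0\,(\nu_2-\mu_2)Q_{21}\bigr]}{\Lambda_0^2}.
\]
Euler's identity $\partial_1\Lambda(s,t)s+\partial_2\Lambda(s,t)t=\Lambda(s,t)$, a consequence of positive $1$-homogeneity of $\Lambda$, makes the $\mu$-part of the bracket combine with the ``$-2$'' to produce the coefficient ``$1$'' in the first line of \eqref{eq:derivA}, leaving only the $\nu$-contributions. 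For each ``interior'' edge $\{i,i+1\}$ with $3\leq i\leq n-1$, both $\mu_i$ and $\mu_{i+1}$ vanish, so $\mu^\alpha_i Q_{i,i+1}=\alpha\nu_i Q_{i,i+1}$ and likewise for $i+1$; full $1$-homogeneity then gives $E_i(\alpha)=\alpha\cdot U_{i,i+1}^2/\Lambda(\nu_i Q_{i,i+1},\nu_{i+1}Q_{i+1,i})$ \emph{exactly}, reproducing the sum in \eqref{eq:derivA}. Degenerate subcases ($\nu_i=0$ or $\nu_{i+1}=0$) force $U_{i,i+1}=0$ by finiteness of $\A(\nu,U)$, and the same convention handles $0/0$.

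The main obstacle is the two ``mixed'' edges $\{2,3\}$ and $\{n,1\}$, where one endpoint carries positive $\mu$-mass while the other is empty; these are precisely the edges absent from the sum in \eqref{eq:derivA}, so I must show that they contribute zero in the limit. This is where Assumption~\ref{ass:bdy} enters. Take edge $\{2,3\}$: if $U_{23}=0$ then $V^\alpha_{23}\equiv 0$ and $E_2\equiv 0$, while otherwise finiteness of $\A(\nu,U)$ forces $\nu_3>0$ and $1$-homogeneity gives
\[
\Lambda(\mu^\alpha_2 Q_{23},\alpha\nu_3 Q_{32})=\alpha\nu_3 Q_{32}\cdot\Lambda\!\left(\frac{\mu^\alpha_2 Q_{23}}{\alpha\nu_3 Q_{32}},1\right),
\]
whose first argument blows up as $\alpha\downarrow 0$. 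Assumption~\ref{ass:bdy} (applied in the first slot by symmetry of $\Lambda$) then yields $\Lambda(\mu^\alpha_2 Q_{23},\alpha\nu_3 Q_{32})/\alpha\to\infty$, so $E_2(\alpha)/\alpha=\alpha U_{23}^2/\Lambda(\mu^\alpha_2 Q_{23},\alpha\nu_3 Q_{32})\to 0$. Edge $\{n,1\}$ is handled identically. Summing the three types of contributions produces \eqref{eq:derivA}.
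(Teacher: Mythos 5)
Your proof is correct and follows essentially the same strategy as the paper: decompose the action edge-by-edge, use exact $1$-homogeneity for the interior edges $\{i,i+1\}$ ($3\le i\le n-1$), differentiate the $\{1,2\}$ term at $\alpha=0$, and invoke Assumption~\ref{ass:bdy} to show that the two mixed edges $\{2,3\}$ and $\{n,1\}$ contribute nothing in the limit. The only cosmetic difference is in the $\{1,2\}$ edge: you apply the quotient rule directly and then clean up with Euler's identity for the $1$-homogeneous $\Lambda$, whereas the paper first factors out $(1-\alpha)$ via homogeneity so that the same simplification happens before differentiating; both yield the identical expression.
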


\begin{proof}
  First note that
  \begin{align*}
    \A(\mu,V) 
    	= \frac{V_{12}^2}{\Lambda\big(\mu_1 Q_{12},\mu_2 Q_{21} \big)}\;.
  \end{align*}
  Using the $1$-homogeneity of $\Lambda$ we observe that
  \begin{align*}
    \A(\mu^\a,V^\a)
    & =\frac{(1-\a)V_{12}^2}{\Lambda\big(\big(\mu_1+\frac{\a}{1-\a}\nu_1\big)Q_{12},\big(\mu_2+\frac{\a}{1-\a}\nu_2\big)Q_{21}\big)}
   + 
   \a\sum_{i=3}^{n-1}\frac{U_{i,i+1}^2}{\Lambda\big(\nu_i Q_{i,i+1},\nu_{i+1}Q_{i+1,i}\big)}
   \\& \qquad 
   +
   \frac{\a U_{23}^2}{\Lambda\big(\big(\frac{1-\a}{\a}\mu_2+\nu_2\big)Q_{23},\nu_3 Q_{32}\big)}
   +
   \frac{\a U_{n1}^2}{\Lambda\big(\nu_n Q_{n1},\big(\frac{1-\a}{\a}\mu_1+\nu_1\big)Q_{1n}\big)}
   \\& =: T_1+T_2+T_3+T_4\;.
  \end{align*}
  Since $\mu_1,\mu_2>0$, the first term in \eqref{eq:derivA} is
  well-defined   
  and easily seen to be the limit of
  $(T_1-\A(\mu,V))/\a$. Obviously, $T_2/\a$ converges to the second
  term in \eqref{eq:derivA}. Finally, $T_3$ vanishes unless
  $U_{23}\neq0$. But in this case, 
 since $(1-\a)/\a\to\infty$
  as $\a\to0$, we see that $T_3/\a$ converges to zero as $\a\to0$ by Assumption \ref{ass:bdy}.  A
  similar argument applies to $T_4$.
\end{proof}

Now we can prove the nonlocality result.

\begin{theorem}\label{thm:triangle}
  Let $(\X,Q,\pi)$ be a Markov triple with $\X=\{1,2,3\}$ and such
  that $Q(x,y) > 0$ for all $x\neq y$. Let $(\mu_t)_{t \in [0,1]}$ be a
  $\W$-geodesic connecting $\mu_0 = \delta_1$ to $\mu_1 = \delta_2$. Then, $\mu_t(3)>0$ for some $0<t<1$.
\end{theorem}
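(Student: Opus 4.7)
The plan is to argue by contradiction via Lemma \ref{lem:derivA}. Suppose $\mu_t(3) = 0$ for all $t \in [0,1]$, so the geodesic is supported on the two-point subset $\{1,2\}$. Since $(\mu_t,V_t)$ is a constant-speed geodesic with $\W(\delta_1,\delta_2) > 0$, one has $\A(\mu_t,V_t) = \W^2(\delta_1,\delta_2) > 0$ for a.e.\ $t$, which forces $V_t(1,2) \neq 0$ and $\mu_t(1),\mu_t(2) > 0$ for a.e.\ $t \in (0,1)$, while $V_t$ vanishes on all edges other than $(1,2)$ by the finite action constraint. I would then build a competitor $(\nu_t,U_t) \in \CE_1(\delta_1,\delta_2)$ with $U_t(1,2) \equiv 0$ and consider the linear interpolation $(\mu^\alpha_t, V^\alpha_t) := (1-\alpha)(\mu_t,V_t) + \alpha(\nu_t,U_t)$, which lies in $\CE_1(\delta_1,\delta_2)$ for every $\alpha \in [0,1]$ by affinity of the continuity equation.

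For the competitor I would fix a compact subinterval $[t_1,t_2] \subset (0,1)$, set $\nu_t \equiv \delta_1$, $U_t \equiv 0$ on $[0,t_1]$ and $\nu_t \equiv \delta_2$, $U_t\equiv 0$ on $[t_2,1]$, and on $[t_1,t_2]$ route all of the mass from $\delta_1$ to $\delta_2$ through vertex $3$ via a smooth profile with $\nu_t(3) > 0$ strictly on the interior and tangent of sufficiently high order to $0$ at $t_1,t_2$. The momentum $U_t$ is then determined on $[t_1,t_2]$ by the continuity equation together with $U_t(1,2) = 0$, and is supported on the edges $(1,3)$ and $(2,3)$. Matching the order of tangency to the behaviour of $\Lambda$ near the axes ensures $\int_0^1 \A(\nu_t,U_t)\dd t < \infty$.

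Lemma \ref{lem:derivA} now applies pointwise at each $t \in (0,1)$ where the hypotheses hold, and since $n = 3$ the sum $\sum_{i=3}^{n-1}$ in \eqref{eq:derivA} is empty. Hence the pointwise right-derivative
\[
G(t) := \lim_{\alpha \downarrow 0} \frac{1}{\alpha}\bigl(\A(\mu^\alpha_t,V^\alpha_t) - \A(\mu_t,V_t)\bigr)
\]
equals the first term on the right of \eqref{eq:derivA}. Since $\partial_1\Lambda,\partial_2\Lambda \geq 0$, the bracketed factor is bounded below by $1$, so
\[
G(t) \leq -\frac{V_t(1,2)^2}{\Lambda(\mu_t(1)Q_{12},\mu_t(2)Q_{21})} = -\A(\mu_t,V_t) = -\W^2(\delta_1,\delta_2) < 0
\]
for a.e.\ $t$. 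By joint convexity of $\A$ the difference quotients $g_\alpha(t) = \alpha^{-1}\bigl(\A(\mu^\alpha_t,V^\alpha_t) - \A(\mu_t,V_t)\bigr)$ are monotone nondecreasing in $\alpha$, bounded below by $G(t)$ and above by $g_1(t) = \A(\nu_t,U_t) - \A(\mu_t,V_t)$, both integrable. Dominated convergence therefore yields
\[
\lim_{\alpha\downarrow 0}\frac{1}{\alpha}\Bigl(\int_0^1 \A(\mu^\alpha_t,V^\alpha_t)\dd t - \W^2(\delta_1,\delta_2)\Bigr) = \int_0^1 G(t)\dd t \leq -\W^2(\delta_1,\delta_2) < 0,
\]
so $(\mu^\alpha, V^\alpha) \in \CE_1(\delta_1,\delta_2)$ has strictly smaller total action than $\W^2(\delta_1,\delta_2)$ for all sufficiently small $\alpha > 0$, contradicting the definition of $\W(\delta_1,\delta_2)$.

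The main obstacle I anticipate is the construction of the competitor with finite action when $\Lambda$ vanishes on the axes (as for $\Lambda_{\log}$). The endpoint intervals $[0,t_1]$ and $[t_2,1]$ present no difficulty since $U_t \equiv 0$ there; the delicate point is the transition at $t_1,t_2$, where $\nu_t(3)$ must rise from zero while keeping $U_t^2/\Lambda$ integrable. A polynomial ansatz of sufficiently high order at $t_1,t_2$ together with the boundary behaviour of $\Lambda$ handles this. Integrability of $G$ is then secured by Euler's identity $s\partial_1\Lambda + t\partial_2\Lambda = \Lambda$, which bounds the bracket in $G$ by $1 + \nu_t(1)/\mu_t(1) + \nu_t(2)/\mu_t(2)$; with our choice of $\nu$ this quantity stays bounded uniformly in $t$.
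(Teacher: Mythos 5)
Your proof takes essentially the same route as the paper's: assume by contradiction that the geodesic transports only along the edge $(1,2)$, interpolate linearly with a competitor $(\nu,U)$ that avoids that edge, apply Lemma~\ref{lem:derivA} pointwise (with the sum empty for $n=3$), and conclude that the interpolant has strictly smaller action. Your convexity/monotone-convergence justification for interchanging the $\alpha$-limit with the time integral is a welcome clarification that the paper leaves implicit (``Consequently, there exists $\alpha>0$\dots''), and your compactly-supported competitor is an admissible variant of the paper's choice (which insists $\nu_t(i)>0$ for all $i$ on all of $(0,1)$): with $U\equiv 0$ on $[0,t_1]\cup[t_2,1]$ the boundary terms $T_3,T_4$ in the lemma's proof vanish identically there, so the conclusion $G(t)\leq -\A(\mu_t,V_t)$ still holds.
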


As $\mu_t(3)>0$ for some $0<t<1$, the result implies that mass is transported along the edges $(1,3)$ and $(3,2)$.

\begin{proof}
  Suppose that the geodesic $(\mu_t,V_t)_{t\in[0,1]}$ transports only
  along the edge $(1,2)$, i.e., $V_t(2,3) = V_t(3,1) = 0$ for a.e.~$t \in (0, 1)$. Then $(\mu_t,V_t)$ must be given by the corresponding geodesic on the two point space $\{1,2\}$. Obviously, we have $\mu_t(1),\mu_t(2)>0$ for all $t\in(0,1)$. 
  Let $(\nu,U) \in \CE_1(\delta_1,\delta_2)$ be a curve of finite action
  such that $U_t(1,2)=0$ for a.e.~$t$ and
  $\nu_t(1),\nu_t(2),\nu_t(3)>0$ for all $0<t<1$. Define
  $(\mu^\a,V^\a)\in\CE_1(\delta_1,\delta_2)$ by
  $\mu^\a=(1-\a)\mu+\a\nu$ and $V^\a=(1-\a)V+\a U$ for $\a\in[0,1]$.
  Then Lemma \ref{lem:derivA} yields for a.e.~$t$:
  \begin{align*}
    \lim_{\a\to 0}\frac{1}{\a}\big[\A(\mu_t^\a,V_t^\a)-\A(\mu_t,V_t)\big]<0\;.
  \end{align*}
  Consequently, there exists $\a>0$ such that 
  \begin{align*}
    \int_0^1\A(\mu_t^\a,V_t^\a)\dd t < \int_0^1\A(\mu_t,V_t)\dd t\;,
  \end{align*}
  contradicting the optimality of $(\mu,V)$.
\end{proof}

\bibliography{bib_geodesics}{}
\bibliographystyle{alpha}
\end{document}